\documentclass[11pt]{amsart}

\usepackage[T1]{fontenc}
\usepackage{amsmath,amssymb,amsthm,mathtools}
\usepackage[hidelinks]{hyperref}

\allowdisplaybreaks
\numberwithin{equation}{section}

\newif\ifanonymous
\ifdefined\ANONYMOUS
  \anonymoustrue
\else
  \anonymousfalse
\fi

\newtheorem{theorem}{Theorem}[section]
\newtheorem{lemma}[theorem]{Lemma}
\newtheorem{proposition}[theorem]{Proposition}
\newtheorem{corollary}[theorem]{Corollary}
\theoremstyle{definition}

\newtheorem{example}[theorem]{Example}
\theoremstyle{remark}
\newtheorem{remark}[theorem]{Remark}

\newcommand{\FS}{\operatorname{FS}}
\newcommand{\Z}{\mathbb Z}
\newcommand{\R}{\mathbb R}
\newcommand{\Q}{\mathbb Q}

\newcommand{\cS}{\mathcal S}
\newcommand{\cE}{\mathcal E}
\newcommand{\PiSet}{\mathcal P}
\newcommand{\sigmaf}{\sigma}

\title[Fixed-defect inverse theorems]
{Fixed-Defect Inverse Theorems for Subset Sums}

\ifanonymous
\else
  \author{Lizhong Chen}
  \address{Department of Mathematics, The Hong Kong University of Science and
  Technology, Clear Water Bay, Kowloon, Hong Kong}
  \email{lchendh@connect.ust.hk}
  \thanks{ORCID: 0009-0000-3495-7319}
\fi

\subjclass[2020]{Primary 11B13; Secondary 11B30}
\keywords{subset sums, inverse additive problems, stability,
integer partitions, endpoint structure}
\date{}

\ifanonymous
  \hypersetup{
    pdftitle={Fixed-Defect Inverse Theorems for Subset Sums},
    pdfauthor={},
    pdfsubject={Inverse theorems for sets with few subset sums},
    pdfkeywords={subset sums, inverse additive problems, stability,
      integer partitions, endpoint structure}
  }
\else
  \hypersetup{
    pdftitle={Fixed-Defect Inverse Theorems for Subset Sums},
    pdfauthor={Lizhong Chen},
    pdfsubject={Inverse theorems for sets with few subset sums},
    pdfkeywords={subset sums, inverse additive problems, stability,
      integer partitions, endpoint structure}
  }
\fi

\begin{document}

\begin{abstract}
Let \(A\) be an \(n\)-element set of positive real numbers, let
\(\FS(A)\) be its set of subset sums, and put
\(T_n=\binom{n+1}{2}\).  For every fixed integer \(C\geq-1\) and all
sufficiently large \(n\), we classify the sets satisfying
\[
  |\FS(A)|\leq T_n+n+C+1.
\]
Each such set is commensurable.  Its unique primitive integer
normalisation \(B\) either satisfies \(\sum B\leq T_n+n+C\) or belongs
to an explicit exceptional family specified by a missing element
\(m\in\{1,2\}\) and an integer partition of \(C+m\) or \(C+m+1\).
If \(P\) denotes the partition function, the exceptional family has exactly
\[
  P(C+1)+2P(C+2)+P(C+3)
\]
primitive dilation classes.  We also prove a local inverse theorem for
bounded increment excess.  If, for sufficiently large \(i\), adjoining
the largest element to the preceding \(i-1\) elements creates only
\(i+e\) new subset sums, where \(e\) is bounded, then the \(i\)-element
set is a dilation of \([1,i+e]_{\Z}\) with exactly \(e\) elements
deleted.  Conversely, every such deletion pattern has increment excess
\(e\).  The proof combines a
stabiliser argument in \(\R/x\Z\), Kneser's theorem, a quadratic
subset-sum bound, and endpoint propagation.  These arguments also give
effective commensurability and a finite-state encoding.  Together with
earlier results for \(C\leq-2\), this completes the eventual fixed-defect
classification for
every integer \(C\).
\end{abstract}

\maketitle

\section{Introduction and main results}
\label{sec:introduction}

Let \(A\) be a finite set of positive real numbers.  Its set of subset
sums is
\[
  \FS(A)=
  \left\{\sum_{a\in X}a:X\subseteq A\right\},
\]
where the empty sum is included.  We write
\(\sigmaf(A)=\sum_{a\in A}a\) and
\[
  T_n=\binom{n+1}{2}.
\]
If \(|A|=n\), the standard chain argument gives
\begin{equation}
\label{eq:intro-minimum}
  |\FS(A)|\geq T_n+1.
\end{equation}
Nathanson proved the corresponding integer result
\cite[Theorem~3]{Nathanson1995} and asked for a stability extension for
small \(h\)-subset-sum sets \cite[Section~6]{Nathanson1995}.  His
notation omits the empty sum; for a positive set, adjoining \(0\) increases
the support cardinality by one.  The proof of \eqref{eq:intro-minimum}
uses only positivity and order, so the same bound holds for positive real
sets.

Positive dilations of integer sets with small total sums provide one source
of sets with few subset sums.  For \(B\subset\Z_{>0}\),
\[
  \FS(B)\subseteq[0,\sigmaf(B)]_{\Z},
\]
so
\begin{equation}
\label{eq:intro-total-bound}
  |\FS(B)|\leq\sigmaf(B)+1.
\end{equation}
The total-sum bound does not account for every set with few subset sums.
A set can have a slightly larger total sum while its subset-sum support
omits only boundedly many integers near its two endpoints.  We determine
all such exceptions when the excess above the quadratic minimum is
\(n+O(1)\).

Carpenter, Defant and Kravitz established the first exact stability range.
For integers \(n\geq4\) and \(0\leq M\leq n-4\), they proved that an
\(n\)-element set \(A\subset\R_{>0}\) satisfies
\[
  |\FS(A)|\leq T_n+1+M
\]
if and only if \(A=\lambda B\) for some \(\lambda>0\) and
\(B\subset\Z_{>0}\) with
\[
  \sigmaf(B)\leq T_n+M
\]
\cite[Theorem~1.1]{CarpenterDefantKravitz2026}.  They also obtained a
quadratic-scale structural theorem that implies eventual
commensurability in the range considered here
\cite[Theorem~4.8]{CarpenterDefantKravitz2026}.

Write
\[
  M=n+C,
\]
where \(C\) is fixed and \(n\) tends to infinity.  The theorem of
Carpenter, Defant and Kravitz covers \(C\leq-4\) for all sufficiently
large \(n\).  Chen determined the two subsequent cases \(C=-3\) and \(C=-2\)
for every \(n\geq5\), including the sharp threshold and all equality
cases \cite[Theorems~4.1 and~5.9]{Chen2026Boundaries}.  That paper also
proved the endpoint self-loop and finite-to-infinite bridge used below
\cite[Lemma~3.3 and Theorem~3.4]{Chen2026Boundaries}.  The present paper
treats every fixed \(C\geq-1\).  Its classification holds for all
sufficiently large \(n\), and integer partitions uniquely parametrise
the exceptional sets.

For an integer \(w\), let
\(\PiSet(w)\) be the set of weakly increasing positive integer partitions
\[
  \pi=(\pi_1,\ldots,\pi_\ell),\qquad
  1\leq\pi_1\leq\cdots\leq\pi_\ell,\qquad
  |\pi|:=\sum_{r=1}^{\ell}\pi_r=w.
\]
Set
\[
  \PiSet(0)=\{()\},\qquad
  \PiSet(w)=\varnothing\quad(w<0),
\]
and write \(\ell(\pi)\) for the number of parts.  For
\(m\in\{1,2\}\) and \(\pi\in\PiSet(w)\), define
\begin{equation}
\label{eq:intro-template}
\begin{split}
  \cE_n(m,\pi)
  ={}&
  \left([1,n-\ell(\pi)+1]_{\Z}\setminus\{m\}\right)\\
  &{}\cup
  \left\{
  n-\ell(\pi)+r+1+\pi_r:
  1\leq r\leq\ell(\pi)
  \right\}.
\end{split}
\end{equation}
For \(n\) sufficiently large relative to \(w\), this is an
\(n\)-element set.  Its initial interval has one missing element, and the
parts of \(\pi\) give the upward displacements of its final elements.

A finite positive-real set is \emph{commensurable} if it is contained in a
positive dilation of \(\Z_{>0}\).  A finite set
\(B\subset\Z_{>0}\) is \emph{primitive} if \(\gcd(B)=1\).  Every
commensurable set has a unique representation \(A=\lambda B\), where
\(\lambda>0\) and \(B\) is primitive.  We call \(B\) the
\emph{primitive normalisation} of \(A\).

\begin{theorem}
\label{thm:main-classification}
Fix an integer \(C\geq-1\).  There is an effectively computable integer
\(N_C\) such that the following holds for every \(n\geq N_C\).  An
\(n\)-element set \(A\subset\R_{>0}\) satisfies
\begin{equation}
\label{eq:target-bound}
  |\FS(A)|\leq T_n+n+C+1
\end{equation}
if and only if exactly one of the following alternatives occurs.
\begin{enumerate}
\item There are unique \(\lambda>0\) and a primitive
  \(B\subset\Z_{>0}\) such that
  \[
    A=\lambda B,\qquad
    \sigmaf(B)\leq T_n+n+C.
  \]
\item There are unique
  \[
    \lambda>0,\qquad
    m\in\{1,2\},\qquad
    \pi\in\PiSet(C+m)\cup\PiSet(C+m+1)
  \]
  such that
  \[
    A=\lambda\cE_n(m,\pi).
  \]
\end{enumerate}
In the second alternative, if \(w=|\pi|\) and
\[
  L=\sigmaf(\cE_n(m,\pi))=T_{n+1}-m+w,
\]
then
\begin{equation}
\label{eq:intro-template-support}
  \FS(\cE_n(m,\pi))
  =[0,L]_{\Z}\setminus\{m,L-m\}.
\end{equation}
\end{theorem}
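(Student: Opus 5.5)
The proof splits into two directions. The converse ("if") direction is largely computational. The forward direction reduces to a structural analysis of primitive integer sets.

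\emph{Converse direction.} Alternative (1) gives the bound immediately from \eqref{eq:intro-total-bound}. For (2) we verify \eqref{eq:intro-template-support} directly.
\begin{itemize}
\item Write \(k=n-\ell(\pi)+1\). The set \([1,k]_{\Z}\setminus\{m\}\) has subset sums equal to \([0,T_k-m]_{\Z}\setminus\{m\}\). This holds because \(\{1,\dots,k\}\setminus\{m\}\) with \(m\le 2\) still represents every integer except \(m\) and \(T_k-m\) up to its total.
\item The top elements \(k+r+\pi_r\) are each at most \(k+r+w\). For \(n\) large they are smaller than the current sum plus one, so the standard greedy extension fills gaps. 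The only surviving holes are \(m\) and, by the symmetry \(s\mapsto L-s\), \(L-m\).
\item Hence \(|\FS|=L-1=T_{n+1}-m+w-1\). With \(w\le C+m+1\) this is at most \(T_n+n+C+1\).
\item For \(w\ge C+m\) we have \(\sigmaf(\cE_n)=T_{n+1}-m+w\ge T_n+n+C+1\) (noting \(T_{n+1}=T_n+n+1\)). This is strictly larger than \(T_n+n+C\), so (2) is disjoint from (1).
\end{itemize}
Uniqueness of \(\lambda,B\) comes from primitivity. Uniqueness of \((m,\pi)\) follows because the missing small element \(m\) and the top elements recover \(\pi\).

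\emph{Forward direction, reduction to integers.} Assume \(|\FS(A)|\le T_n+n+C+1\). We first invoke eventual commensurability, via \cite[Theorem~4.8]{CarpenterDefantKravitz2026} or the stabiliser argument in \(\R/x\Z\) with Kneser's theorem advertised in the abstract. This lets us write \(A=\lambda B\) with \(B\) primitive, and we order \(B=\{b_1<\dots<b_n\}\).

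\emph{Forward direction, increment excesses.} Define \(e_i=|\FS(B_i)|-|\FS(B_{i-1})|-i\) and \(E=\sum_i e_i\), so that \(|\FS(B)|=T_n+1+E\) with \(E\le n+C\).
\begin{itemize}
\item Each \(e_i\ge0\), and only boundedly many of the \(e_i\) can be large. Using the local inverse theorem for bounded increment excess, for large \(i\) with bounded excess the prefix \(B_i\) is a dilation of \([1,i+e_i]_{\Z}\) with \(e_i\) deletions.
\item Primitivity and nestedness of prefixes force the dilation factor to be \(1\) from some bounded index onward.
\item Endpoint propagation (the self-loop lemma and finite-to-infinite bridge of \cite{Chen2026Boundaries}) then shows that \(B\) is an interval \([1,k]\) minus at most a bounded set of small elements, plus boundedly many top elements displaced upward.
\end{itemize}

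\emph{Forward direction, counting and case split.} Write \(D\) for the set of deleted small elements and \(\pi\) for the displacements. The sum is \(\sigmaf(B)=T_{n+1}-\sigmaf(D)+|\pi|\), where the \(|D|\) reinserted top positions are absorbed into the template indexing. We compare \(\sigmaf(B)\) with \(|\FS(B)|\).
\begin{itemize}
\item If \(\FS(B)\) has no holes in \([0,\sigmaf(B)]\), then \(\sigmaf(B)+1=|\FS(B)|\le T_n+n+C+1\), which is alternative (1).
\item If there are holes, symmetry and the interval structure show they occur only near the endpoints. Holes in \(\FS\) near \(0\) come only from \(D=\{m\}\) with \(m\in\{1,2\}\). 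Deleting \(\{1,2\}\), or any \(m\ge3\), either creates no hole or produces at least \(2\) holes per side. Both outcomes push the count past the threshold or back into (1), once sums are compared.
\item Then \(|\FS|=L-1\). Combined with \(\sigmaf(B)>T_n+n+C\), this forces \(w\in\{C+m,C+m+1\}\).
\end{itemize}

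\emph{Main obstacle.} I expect the hardest step to be the global structural step: turning the bound \(E\le n+C\), which allows total excess linear in \(n\), into bounded deviations from the interval. This requires showing that the excess budget is spent mostly as a uniform shift, namely one missing element throughout, costing about \(1\) per step. It must not be spent on scattered defects. My first attempt would be to show that \(e_i\ge1\) for all large \(i\) as soon as some small element is missing. I would then argue that the remaining budget \(C+O(1)\) bounds the number of further irregularities, applying the local inverse theorem at each large \(i\).
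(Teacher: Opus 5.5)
There is a genuine gap. It sits in the forward direction, exactly at the step you call the main obstacle: you name it but give no argument. Worse, the structural claims you state before splitting into cases are false without a further restriction. For instance, take \(p=\lfloor n/2\rfloor\) and \(B=[n+2]\setminus\{p+1,p+2\}\). This set has interval support and satisfies the bound for \(C\geq1\). Yet about \(n/2\) of its excesses equal \(2\), and it is not an interval minus boundedly many small elements plus boundedly many displaced top elements. So you must first restrict to sets whose support is not an interval. The paper gets this from \(\sigmaf(B)>T_n+n+C\) and primitivity; your ``holes'' case would serve equally well.

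The missing idea is then Proposition~\ref{prop:anomaly-tail}. The trigger for \(e_i\geq1\) is not that a small element is missing; that is what you are trying to prove. The trigger is the equality case, Theorem~\ref{thm:cdk-equality}: for \(i\geq4\), \(e_i=0\) forces \(B_i=t[i]\). So zero excess occurs only while the prefix support is an arithmetic progression \(t[0,\ell]_{\Z}\). Let \(k\) be the largest size of such a prefix (so \(k<n\)). Then \(\FS(B_k)\) and \(b_{k+1}+\FS(B_k)\) are disjoint, whence \(e_{k+1}\geq k(k-1)/2\), while \(e_i\geq1\) for all \(i>k\) with \(i\geq4\). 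The budget \(\sum_i e_i\leq n+C\) then forces \(k\leq K_C\) and \(\sum_{i>k}(e_i-1)\leq D_C\). This bounded surplus is what converts the linear budget into bounded deviations.

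Given that bound, the paper finishes more economically than your multi-deletion case analysis, which as written rests on structure not yet proved. It pigeonholes over the \(D_C+1\) indices in \([J_C,J_C+D_C]_{\Z}\) to find \(i_0\) with \(e_{i_0}=1\) \emph{exactly}. Theorem~\ref{thm:bounded-excess} with \(E=1\) then gives \(B_{i_0}=t([i_0+1]\setminus\{m\})\), with a single deletion. Here \(t=1\), because a later element not divisible by \(t\) would force \(e_j-1\geq i_0(i_0-1)/2-1>D_C\). Lemma~\ref{lem:one-deletion-support} gives the state \(\varnothing\), \(\{1\}\) or \(\{2\}\), and Theorem~\ref{thm:endpoint-bridge} with \(E_C=D_C+1\) propagates it. The state \(\varnothing\) (that is, \(m\geq3\)) is excluded because it would leave an interval support. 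The bridge also gives \(b_j=j+e_j\) with nondecreasing \(e_j\) for \(j>i_0\), so the parts of \(\pi\) are the values \(e_j-1\) after the last unit excess.

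Two smaller points. First, citing \cite[Theorem~4.8]{CarpenterDefantKravitz2026} may not give an \emph{effective} \(N_C\). The paper's Theorem~\ref{thm:effective-commensurability} instead runs the same quadratic-cost pattern at the first rank jump. Second, in your converse, \(\FS([1,k]_{\Z}\setminus\{m\})\) omits \(m\) and \(T_k-2m\), not \(T_k-m\). Also, appending \(x\) to a prefix with support \(\cS(L,\{m\})\) keeps the two-hole shape only under a condition like \(m<x<L-2m\) (Lemma~\ref{lem:endpoint-loop}), not merely \(x\leq L+1\); indeed \(x=L+1\) leaves two extra holes.
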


The first alternative is the \emph{ordinary total branch}; its sufficiency
follows immediately from \eqref{eq:intro-total-bound}.  We call the sets
in the second alternative \emph{exceptional}.  Formula
\eqref{eq:intro-template-support} gives
\[
  |\FS(\cE_n(m,\pi))|=L-1.
\]
If \(w=C+m\), then
\[
  L=T_n+n+C+1,\qquad
  |\FS(\cE_n(m,\pi))|=T_n+n+C.
\]
If \(w=C+m+1\), then
\[
  L=T_n+n+C+2,\qquad
  |\FS(\cE_n(m,\pi))|=T_n+n+C+1.
\]
In both cases the total exceeds \(T_n+n+C\), so the two alternatives in
Theorem~\ref{thm:main-classification} are disjoint.

Let \(P(w)=|\PiSet(w)|\), with \(P(0)=1\) and \(P(w)=0\) for
\(w<0\).

\begin{corollary}
\label{cor:template-count}
For each fixed integer \(C\geq-1\) and every sufficiently large \(n\),
the number of exceptional primitive dilation classes satisfying
\eqref{eq:target-bound} is
\[
  P(C+1)+2P(C+2)+P(C+3).
\]
\end{corollary}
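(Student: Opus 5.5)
The count follows from Theorem~\ref{thm:main-classification} once we know how many distinct primitive dilation classes the exceptional alternative produces. The plan has three steps: count the admissible parameter pairs, check that each template is primitive, and show that distinct pairs give distinct sets.

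\emph{Counting pairs.} By the theorem, the exceptional sets satisfying \eqref{eq:target-bound} are exactly the dilations \(\lambda\cE_n(m,\pi)\) with \(m\in\{1,2\}\) and \(\pi\in\PiSet(C+m)\cup\PiSet(C+m+1)\). Since \(C+m\neq C+m+1\), this union is disjoint. For fixed \(m\) there are therefore \(P(C+m)+P(C+m+1)\) partitions. Summing over \(m=1,2\) gives
\[
  P(C+1)+P(C+2)+P(C+2)+P(C+3)=P(C+1)+2P(C+2)+P(C+3).
\]
The convention \(P(w)=0\) for \(w<0\) handles \(C=-1\), \(m=1\): there \(w\in\{0,1\}\), and \(P(0)=1\) counts the empty partition.

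\emph{Primitivity.} For \(n\) large relative to \(C\), the initial interval \([1,n-\ell(\pi)+1]_{\Z}\setminus\{m\}\) contains two consecutive integers, for example \(3\) and \(4\), since \(\ell(\pi)\leq C+3\). Hence \(\gcd(\cE_n(m,\pi))=1\). Each \(\cE_n(m,\pi)\) is thus its own primitive normalisation, and the number of classes equals the number of distinct sets \(\cE_n(m,\pi)\).

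\emph{Injectivity.} I expect this to be the only step needing care, namely recovering \((m,\pi)\) from \(E=\cE_n(m,\pi)\).
\begin{enumerate}
\item The parameter \(m\) is the smallest positive integer missing from \(E\). The tail elements exceed \(n-\ell+1\), and \(m\leq2<n-\ell+1\) for large \(n\).
\item The sum \(\sigmaf(E)=T_{n+1}-m+w\) then determines \(w=|\pi|\).
\item The tail elements \(n-\ell+r+1+\pi_r\) are strictly increasing in \(r\) and all exceed \(n-\ell+2\), since \(\pi_r\geq1\). So \(E\) is the interval \([1,n-\ell+1]_{\Z}\) with \(m\) removed, followed by \(\ell\) elements, and the first of these is at least \(n-\ell+3\). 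This gap of at least one missing integer locates the interval's endpoint, and hence determines \(\ell\).
\item With \(\ell\) known, the parts are \(\pi_r=e_r-(n-\ell+r+1)\), where \(e_r\) is the \(r\)th tail element.
\end{enumerate}
Alternatively, uniqueness of \((\lambda,m,\pi)\) is already asserted in the theorem's second alternative, so the map \((m,\pi)\mapsto\) class is injective directly and the count follows.
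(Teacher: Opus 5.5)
Your proposal is correct and follows the paper's route. The paper likewise counts the admissible pairs $(m,\pi)$, with weights $C+1,C+2$ for $m=1$ and $C+2,C+3$ for $m=2$, and then invokes the uniqueness clause of Theorem~\ref{thm:main-classification}; that theorem's proof recovers $m$ from the lowest missing element and $\pi$ from the end of the initial block, much as your injectivity steps do. One minor point: for $C\geq-1$ every weight is nonnegative, so the convention $P(w)=0$ for $w<0$ is never used, and only $P(0)=1$ matters in the case $C=-1$, $m=1$.
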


The permitted partition weights contribute \(P(C+1)+P(C+2)\) when
\(m=1\), and \(P(C+2)+P(C+3)\) when \(m=2\).  Thus there are five
exceptional classes for \(C=-1\) and eight for \(C=0\); for fixed
\(C\), the number is independent of \(n\).

Combining Theorem~\ref{thm:main-classification} with
\cite[Theorem~1.1]{CarpenterDefantKravitz2026} and
\cite[Theorems~4.1 and~5.9]{Chen2026Boundaries} completes the eventual
fixed-defect classification for every integer \(C\).  For \(C=-3\)
and \(C=-2\), Chen's theorems retain the sharper statements for every
\(n\geq5\), including all equality cases.  The present theorem covers
\(C\geq-1\).

The main proof uses a local inverse theorem.  Let
\[
  A_i=\{a_1<\cdots<a_i\}\subset\R_{>0},
\]
let \(y_i\) be the increment excess defined in
Section~\ref{sec:preliminaries}.  Theorem~\ref{thm:bounded-excess}
states that, for fixed \(E\) and sufficiently large \(i\),
\[
  0\leq y_i=e\leq E
\]
if and only if
\[
  A_i=t\bigl([1,i+e]_{\Z}\setminus D\bigr),
  \qquad t>0,\quad |D|=e.
\]
No integrality or global support bound is assumed.  Bounded excess at
one step forces the entire prefix into a one-dimensional lattice and
determines the number of deleted elements.

Related literature considers subset sums in groups and vector spaces.
Brickell and Saks developed rank-sensitive lower bounds for
subset sums of finite vector sets \cite{BrickellSaks1993}.  DeVos, Goddyn,
Mohar and {\v S}amal proved a quadratic lower bound in an arbitrary
abelian group in terms of the stabiliser of the subset-sum set
\cite{DeVosEtAl2007}.  Balandraud, Girard, Griffiths and Hamidoune obtained
related estimates for symmetric and antisymmetric sets in finite abelian
groups \cite{BalandraudEtAl2013}.  Our local theorem uses the stabiliser
estimate in the circle quotient determined by the largest element.  The
order on the original positive-real set then converts the finite subgroup
obtained in that quotient into a deleted initial interval.

To see the group-theoretic mechanism, put \(x=a_i\) and reduce the earlier
elements modulo \(x\) in \(\R/x\Z\).  If \(R\) is the resulting residue
set and \(\rho=|\FS(R)|\), the chain decomposition gives
\[
  i\leq\rho\leq i+E.
\]
The quadratic estimate of DeVos, Goddyn, Mohar and {\v S}amal rules out a
trivial stabiliser.  Kneser's theorem then forces \(\FS(R)\) to be a finite
subgroup of \(\R/x\Z\).  The structure of this subgroup yields an
initial interval with finitely many deletions.  Lev's consecutive
subset-sum theorem confines all holes to two endpoint intervals; the
endpoint self-loop then identifies their number with the increment excess.

The local theorem also yields effective commensurability.  The
identity
\[
  |\FS(A)|=1+T_n+\sum_{i=1}^n y_i
\]
gives a linear upper bound on the nonnegative excesses.  If the rational
rank first increases at a late index, the preceding support and its
translate lie in distinct cosets, making the increment quadratic.
The first rank increase therefore occurs at an index bounded in terms
of \(C\).  The linear bound then forces a unit-excess step at a
sufficiently large index.  By Theorem~\ref{thm:bounded-excess}, the
corresponding prefix is commensurable, a contradiction.  This proves
effective commensurability without using a general vector-rank theorem.

After primitive normalisation, the same bound yields a unit-excess
prefix of the form
\[
  [1,i+1]_{\Z}\setminus\{m\}.
\]
If \(m\geq3\), its support is an interval and remains so under the
bounded-excess tail, placing the final set in the ordinary branch.  Thus an
exceptional set has \(m=1\) or \(m=2\).  Endpoint propagation gives
\[
  b_j=j+y_j
\]
on the remaining tail.  Since the \(b_j\) are strictly increasing, the
\(y_j\) are nondecreasing.  After omitting the initial run of indices for
which \(y_j=1\), the positive values \(y_j-1\) form the partition \(\pi\) in
\eqref{eq:intro-template}.  The support bound leaves precisely the two
weights \(C+m\) and \(C+m+1\).

The partition description also gives a finite-state encoding: for fixed
\(C\), a state records \(m\), the most recent partition part, and the
accumulated weight.  Section~\ref{sec:consequences} establishes the
corresponding bijection.

Section~\ref{sec:preliminaries} sets up the increment framework and the
additive tools.  Sections~\ref{sec:bounded-excess}--\ref{sec:endpoint-tail}
prove the local inverse theorem, effective commensurability, and endpoint
rigidity.  Sections~\ref{sec:classification} and~\ref{sec:consequences}
give the classification and its enumerative consequences.

\section{Increment growth and additive tools}
\label{sec:preliminaries}

\subsection{The increment framework}

For integers \(u\leq v\), write
\[
  [u,v]_{\Z}=\{u,u+1,\ldots,v\},
\]
and set \([u,v]_{\Z}=\varnothing\) when \(u>v\).  We abbreviate
\([1,v]_{\Z}\) to \([v]\).  As above, \(\FS(A)\) includes the empty sum.

Let
\[
  A=\{a_1<\cdots<a_n\}\subset\R_{>0}.
\]
For \(0\leq i\leq n\), put
\[
  A_i=\{a_1,\ldots,a_i\},\qquad
  S_i=\FS(A_i),
\]
where \(A_0=\varnothing\) and \(S_0=\{0\}\).  Define
\[
  d_i=|S_i\setminus S_{i-1}|,\qquad
  y_i=d_i-i,\qquad
  Y_i=\sum_{j=1}^i y_j.
\]
We refer to \(y_i\) as the \emph{increment excess} at \(i\).

\begin{lemma}
\label{lem:chain-count}
Let \(B=\{b_1<\cdots<b_m\}\subset\R_{>0}\), let \(x>b_m\), and put
\(S=\FS(B)\).  Decompose \(S\) into maximal arithmetic chains of common
difference \(x\).  Then
\[
  |(x+S)\setminus S|
\]
is the number of these chains and is at least \(m+1\).
\end{lemma}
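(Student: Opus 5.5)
We partition \(S\) by the equivalence relation \(s\sim s'\) iff \(s-s'\in x\Z\), and then split each class into maximal runs \(s,s+x,\ldots,s+kx\) with all terms in \(S\). These runs are the maximal arithmetic chains of common difference \(x\). Since \(S\) is finite, every chain has a well-defined top element \(s+kx\). For such a top element, \(s+(k+1)x\in(x+S)\setminus S\). Conversely, suppose \(z\in(x+S)\setminus S\). Then \(z-x\in S\) and \(z\notin S\), so \(z-x\) is the top of its chain. The map sending a chain to (its top)\(+x\) is therefore a bijection onto \((x+S)\setminus S\), and this proves the first assertion.

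For the lower bound we exhibit \(m+1\) elements of \(S\) lying in pairwise distinct chains. Two elements of the same chain differ by a nonzero multiple of \(x\), which is at least \(x\) in absolute value. It therefore suffices to find \(m+1\) elements of \(S\) whose pairwise differences are all strictly smaller than \(x\) in absolute value and nonzero.

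The obvious choice is the partial sums \(0,b_1,b_1+b_2,\ldots\), but their spread is \(\sigmaf(B)\), which can exceed \(x\). Instead we use the top-heavy sums
\[
s_0=0,\qquad s_k=b_m+b_{m-1}+\cdots+b_{m-k+1}\quad(1\leq k\leq m)?
\]
These also spread too far. The correct choice is a window of length less than \(x\). Take the elements
\[
u_j=\sum_{r=j+1}^{m} b_r\qquad(0\leq j\leq m),
\]
together with \(\sigmaf(B)-\) single deletions. Simpler still is the following. The \(m+1\) elements
\[
\sigmaf(B)-b_m<\sigmaf(B)-b_{m-1}<\cdots<\sigmaf(B)-b_1<\sigmaf(B)
\]
all lie in \(S\), are distinct, and lie in the interval \([\sigmaf(B)-b_m,\sigmaf(B)]\), whose length is \(b_m<x\). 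Hence their pairwise differences are nonzero and less than \(x\) in absolute value, so no two lie in the same chain. The number of chains is therefore at least \(m+1\).

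The main point needing care is the first step: "maximal chain" must be read within a residue class modulo \(x\), so that consecutive terms differ by exactly \(x\). This is why two distinct elements of one chain differ by at least \(x\). No further obstacles are expected.
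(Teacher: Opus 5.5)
Your proof is correct and follows the paper's argument: both use the same bijection sending each chain's top element $s$ to $s+x$. For the lower bound, your $m+1$ sums $\sigmaf(B)-b_m,\ldots,\sigmaf(B)-b_1,\sigmaf(B)$ are the reflection under $s\mapsto\sigmaf(B)-s$ of the paper's simpler choice $0,b_1,\ldots,b_m$, which also lies in a window of length $b_m<x$; delete the abandoned attempts (partial sums, top-heavy sums, the $u_j$) from the write-up.
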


\begin{proof}
Addition of \(x\) sends each nonterminal member of a chain to its next
member, but sends the upper endpoint outside \(S\).  Thus each chain
contributes exactly one point to \((x+S)\setminus S\).  There are at least
\(m+1\) chains because the points
\[
  0,b_1,\ldots,b_m
\]
lie in distinct chains: the absolute difference between any two is
positive and smaller than \(x\).
\end{proof}

The lemma gives \(y_i\geq0\) and
\begin{equation}
\label{eq:prefix-cardinality}
  |S_i|=1+T_i+Y_i,\qquad
  T_i=\binom{i+1}{2}.
\end{equation}
In particular,
\begin{equation}
\label{eq:global-budget}
  |\FS(A)|\leq T_n+n+C+1
  \quad\Longleftrightarrow\quad
  Y_n\leq n+C.
\end{equation}

The equality case in Lemma~\ref{lem:chain-count} is due to Carpenter,
Defant and Kravitz.

\begin{theorem}[Carpenter--Defant--Kravitz {\cite[Lemma~2.1]{CarpenterDefantKravitz2026}}]
\label{thm:cdk-equality}
Let \(m\geq3\), let \(B\subset\R_{>0}\) have \(m\) elements, and let
\(x>\max B\).  Then
\[
  |(x+\FS(B))\setminus\FS(B)|=m+1
\]
if and only if
\[
  B=t[m],\qquad x=t(m+1)
\]
for some \(t>0\).
\end{theorem}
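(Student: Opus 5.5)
The \emph{if} direction is a direct count. The \emph{only if} direction starts from one observation: equality in Lemma~\ref{lem:chain-count} forces every chain to start at one of the points \(0,b_1,\ldots,b_m\) used in that proof. Comparing the sums \(b_k+b_m\) with \(x\) then pins down \(B\) and \(x\).

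\emph{If.} Suppose \(B=t[m]\) and \(x=t(m+1)\). Then
\[
  \FS(B)=t[0,T_m]_{\Z}.
\]
The maximal chains of difference \(t(m+1)\) in this set are its intersections with the residue classes modulo \(m+1\), after scaling by \(t\). Since \(T_m\geq m\), every residue class modulo \(m+1\) meets \([0,T_m]_{\Z}\). So there are exactly \(m+1\) chains, and Lemma~\ref{lem:chain-count} gives the count \(m+1\).

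\emph{Only if.} Write \(B=\{b_1<\cdots<b_m\}\) and \(S=\FS(B)\). By Lemma~\ref{lem:chain-count}, the points \(0,b_1,\ldots,b_m\) lie in distinct chains. Equality therefore means these are the only chains, and each such point is the lowest member of its chain, so they are the only chain starts.

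Any \(s\in S\) with \(s<x\) has \(s-x<0\), so \(s-x\notin S\), and \(s\) is a chain start. Hence
\[
  S\cap[0,x)=\{0,b_1,\ldots,b_m\}.
\]
Any \(s\in S\cap[x,2x)\) is not a chain start, so \(s-x\in S\cap[0,x)\). Consequently
\[
  S\cap[x,2x)\subseteq x+\{0,b_1,\ldots,b_m\}.
\]

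Now fix \(k\in[m]\) and consider \(s_k=b_k+b_m\). This is a subset sum when \(k<m\), and for \(k=m\) we use \(2b_m\) only through the same comparisons. We have \(s_k<2x\). We also have \(s_k>b_m\), so \(s_k\notin\{0,b_1,\ldots,b_m\}\), and the first display forces \(s_k\geq x\). By the second display,
\[
  c_k:=s_k-x\in\{0,b_1,\ldots,b_m\}.
\]
The \(c_k\) are strictly increasing in \(k\), and \(c_k<b_k\) because \(b_m<x\). So \(c_1<\cdots<c_k\) are \(k\) distinct elements of the \(k\)-element set \(\{0,b_1,\ldots,b_{k-1}\}\). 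This forces
\[
  c_1=0,\qquad c_k=b_{k-1}\quad(k\geq2).
\]

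These identities read \(b_1=x-b_m\) and \(b_k-b_{k-1}=x-b_m\). Put \(t=x-b_m>0\). Then \(b_k=kt\) for all \(k\), and \(x=b_m+t=(m+1)t\).

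The main obstacle is the case \(k=m\). For \(m\geq2\) the sum \(2b_m\) is not a subset sum, so the second display does not apply to it. To avoid this, I run the counting argument only for \(k\leq m-1\). That already gives \(b_k=kt\) for \(k\leq m-1\), together with \(x=b_m+t\).

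The value \(b_m\) is then fixed by the sum \(b_{m-2}+b_{m-1}\), which is a subset sum because \(m\geq3\). This is where the hypothesis is used. I would check that this sum must equal \(b_m\) or land in \(x+\{0,b_1,\ldots\}\) consistently, and that this gives \(b_m=mt\). If that check fails, I would fall back on the sums \(b_1+b_{m-1}\) and \(b_2+b_{m-1}\) to locate \(b_m\).
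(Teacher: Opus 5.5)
The paper does not prove this statement; it cites it from Carpenter--Defant--Kravitz, so your argument has to stand on its own. Most of it does. The \emph{if} direction is fine. In the \emph{only if} direction, the following steps are all correct: the identification of the chain starts, the two displays \(S\cap[0,x)=\{0,b_1,\dots,b_m\}\) and \(S\cap[x,2x)\subseteq x+\{0,b_1,\dots,b_m\}\), and the pigeonhole on \(c_k=b_k+b_m-x\) for \(k\le m-1\). Together they give \(b_k=kt\) for \(k\le m-1\) and \(x=b_m+t\), where \(t=x-b_m\). You should delete the paragraph concluding ``\(b_k=kt\) for all \(k\)'' and \(x=(m+1)t\). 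It rests on the case \(k=m\), which is invalid because \(2b_m\) is not a subset sum.

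The gap is the determination of \(b_m\), which you leave as a check still to be done, and your primary candidate does not do it. The sum \(b_{m-2}+b_{m-1}=(2m-3)t\) only forces \(b_m\in\{mt,(m+1)t,\dots,(2m-3)t\}\), which is not enough once \(m\ge4\). For example, take \(t=1\), \(b_1,b_2,b_3=1,2,3\), \(b_4=5\) and \(x=6\). Then \(b_2+b_3=5=b_4<x\), which is consistent with both displays, yet \(b_4\ne4\).

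Your fallback sum is the right one. Since \(m-1\ge2\), \(b_1+b_{m-1}=mt\) is a genuine subset sum. Since \(b_m>b_{m-1}=(m-1)t\), we have \(mt<b_m+t=x\). Hence \(mt\in S\cap[0,x)=\{0,t,\dots,(m-1)t,b_m\}\). Because \(mt>(m-1)t\), this forces \(b_m=mt\), and then \(x=(m+1)t\).

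With that one line the proof is complete. It also shows exactly where \(m\ge3\) is used. For \(m=2\), any \(B=\{b_1<b_2\}\) with \(x=b_1+b_2\) gives exactly three chains, \(\{0,x\}\), \(\{b_1\}\) and \(\{b_2\}\). For instance \(\{1,3\}\) with \(x=4\) attains equality without being a dilation of \([2]\).
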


Consequently, for \(i\geq4\),
\begin{equation}
\label{eq:zero-excess}
  y_i=0
  \quad\Longleftrightarrow\quad
  A_i=t[i]\ \text{for some }t>0.
\end{equation}

A finite set of positive real numbers is commensurable precisely when its
\(\Q\)-linear span has dimension one.  In that case, clearing denominators
and dividing by a greatest common divisor gives the primitive normalisation
described in the introduction.

\subsection{Subset sums in abelian groups}

For a subset \(X\) of an abelian group \(G\), its stabiliser is
\[
  \operatorname{stab}(X)=\{g\in G:g+X=X\}.
\]
We use the following standard form of Kneser's theorem, recorded in
\cite[Theorem~1.3]{DeVosEtAl2007}.

\begin{theorem}[Kneser]
\label{thm:kneser}
Let \(X_1,\ldots,X_s\) be finite nonempty subsets of an abelian group, and
let
\[
  H=\operatorname{stab}(X_1+\cdots+X_s).
\]
Then
\[
  |X_1+\cdots+X_s|
  \geq\sum_{\nu=1}^s|X_\nu+H|-(s-1)|H|.
\]
\end{theorem}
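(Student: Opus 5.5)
This is the classical theorem of Kneser, which the paper cites rather than proves. My plan is the standard one: first prove the two-set case, then obtain the \(s\)-set statement by induction on \(s\).

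\emph{Two-set case.} I want to show \(|X+Y|\geq|X+H|+|Y+H|-|H|\), where \(H=\operatorname{stab}(X+Y)\).
\begin{enumerate}
\item Replace \(X,Y\) by \(X+H,Y+H\). This leaves \(X+Y\) unchanged, since \(X+Y\) is \(H\)-periodic.
\item Pass to \(G/H\). There \(X+Y\) has trivial stabiliser, and the claim becomes \(|\bar X+\bar Y|\geq|\bar X|+|\bar Y|-1\).
\item Prove this quotient statement by induction on \(|\bar Y|\), using the Dyson \(e\)-transform
\[
  X'=X\cup(Y+e),\qquad Y'=Y\cap(X-e),
\]
with \(e\in X-Y\). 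This transform keeps \(X'+Y'\subseteq X+Y\) and \(|X'|+|Y'|=|X|+|Y|\).
\item If some \(e\) makes \(Y'\) a nonempty proper subset of \(Y\), apply induction to \((X',Y')\). Then compare stabilisers: either the stabiliser of \(X'+Y'\) is trivial and we are done, or it is a nontrivial subgroup \(K\). In the latter case, run a second induction, on \(|\bar X+\bar Y|\) or on the order of the ambient group, with \(K\)-cosets. The standard argument shows that \(X+Y\) would then be \(K\)-periodic, a contradiction.
\item If no such \(e\) exists, then \(Y+e\subseteq X\) for every \(e\in X-Y\). This forces \(X+Y-Y=X\), so \(\langle Y-Y\rangle\) stabilises \(X\), and hence stabilises \(X+Y\). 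This again contradicts triviality of the stabiliser unless \(|Y|=1\), where the bound is trivial.
\end{enumerate}

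\emph{From two sets to \(s\) sets.} Put \(H=\operatorname{stab}(X_1+\cdots+X_s)\) and replace every \(X_\nu\) by \(X_\nu+H\). This changes neither the total sum nor the right-hand side, so I may assume every \(X_\nu\) is \(H\)-periodic. Working in \(G/H\), the total sum now has trivial stabiliser, and the target becomes
\[
  |\bar X_1+\cdots+\bar X_s|\geq\sum_\nu|\bar X_\nu|-(s-1).
\]
Let \(W=\bar X_1+\cdots+\bar X_{s-1}\). The two-set case applied to \(W\) and \(\bar X_s\) gives \(|W+\bar X_s|\geq|W|+|\bar X_s|-1\), because the stabiliser of their sum is trivial. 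It then remains to show \(|W|\geq\sum_{\nu<s}|\bar X_\nu|-(s-2)\).

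\emph{Main obstacle.} The partial sum \(W\) may have a nontrivial stabiliser \(K\), and the inductive bound for \(s-1\) sets controls only \(|\bar X_\nu+K|\) in units of \(|K|\). I would handle this with a superadditivity-type inequality exploiting that \(W+\bar X_s\) has trivial stabiliser. If \(K\neq0\), then \(\bar X_s\) must meet at least two \(K\)-cosets non-periodically. Applying the two-set theorem to \(W\) and \(\bar X_s\) modulo \(K\), and counting \(K\)-cosets, should recover the loss
\[
  \sum_{\nu<s}\bigl(|\bar X_\nu+K|-|\bar X_\nu|\bigr)-(s-2)(|K|-1).
\]
If this coset bookkeeping becomes unwieldy, my fallback is to prove the \(s\)-set version directly, by running the \(e\)-transform argument on one pair \((X_i,X_j)\) while holding the others fixed. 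That route never needs stabilisers of partial sums.
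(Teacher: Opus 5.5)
\textbf{Step 4 of your two-set case has a genuine gap.} The paper does not prove this statement; it quotes it from DeVos--Goddyn--Mohar--\v{S}amal (their Theorem~1.3). Your Step~4 is exactly where the content of Kneser's theorem lies. When \(K=\operatorname{stab}(X'+Y')\) is nontrivial, the inductive bound only gives
\[|X+Y|\geq|X'+Y'|\geq|X'+K|+|Y'+K|-|K|\geq|X|+|Y|-|K|,\]
which falls short of \(|X|+|Y|-1\) by up to \(|K|-1\).

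You dismiss this case by saying the standard argument makes \(X+Y\) \(K\)-periodic, a contradiction. But the case occurs with \(X+Y\) aperiodic. In \(\Z/6\Z\) take \(X=\{0,1,3\}\) and \(Y=\{0,1\}\), so \(X+Y=\{0,1,2,3,4\}\) has trivial stabiliser. The choice \(e=3\in X-Y\) gives \(X'=\{0,1,3,4\}\) and \(Y'=\{0\}\subsetneq Y\). Then \(X'+Y'=X'\) has stabiliser \(K=\{0,3\}\), while \(X+Y\) is not \(K\)-periodic. So the situation of Step~4 alone yields no periodicity contradiction.

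What is needed is an argument that uses the counterexample hypothesis \(|X+Y|\leq|X|+|Y|-2\), for a suitably minimal counterexample, together with how \(X\), \(Y\) and \(X+Y\) sit in the \(K\)-cosets. One starting point: \(X+(Y\cap(Y'+K))\subseteq X'+Y'+K=X'+Y'\). Hence everything in \((X+Y)\setminus(X'+Y')\) comes from \(X+(Y\setminus(Y'+K))\). One then has to apply the induction hypothesis again, either to such pieces or to the images in \(G/K\), to recover the missing \(|K|-1\) elements. Every standard proof, for instance Kneser's original one or DeVos's short proof, contains such a further counting step. Your phrase about a second induction with \(K\)-cosets gestures at it but supplies none of it, so the two-set case is not established.

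\textbf{The obstacle you flag for \(s\) sets is illusory.} In \(G/H\), if \(k+\bar W=\bar W\), then \(k+\bar W+\bar X_s=\bar W+\bar X_s\). Hence
\[\operatorname{stab}(\bar W)\subseteq\operatorname{stab}(\bar X_1+\cdots+\bar X_s)=\{0\}.\]
The \((s-1)\)-set case therefore gives \(|\bar W|\geq\sum_{\nu<s}|\bar X_\nu|-(s-2)\) directly. The superadditivity step and the fallback are unnecessary, and your reduction to two sets is correct once the two-set case is genuinely proved.
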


The second group-theoretic input is a quadratic estimate for subset sums.

\begin{theorem}[DeVos--Goddyn--Mohar--{\v S}amal
{\cite[Theorem~1.5]{DeVosEtAl2007}}]
\label{thm:devos}
Let \(R\) be a finite subset of an abelian group, and let
\[
  H=\operatorname{stab}(\FS(R)).
\]
Then
\[
  |\FS(R)|\geq |H|+\frac{1}{64}|R\setminus H|^2.
\]
\end{theorem}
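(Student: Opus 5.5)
Since Theorem~\ref{thm:devos} is quoted from \cite{DeVosEtAl2007}, the paper takes it as given. If I had to prove it, I would first reduce to the case of trivial stabiliser and then get quadratic growth by applying Kneser's theorem blockwise.

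\emph{Reduction to trivial stabiliser.} Put \(S=\FS(R)\) and \(H=\operatorname{stab}(S)\). Then \(S\) is a union of \(H\)-cosets, so \(|S|=|H|\cdot|\bar S|\), where \(\bar S\) is the image of \(S\) in \(G/H\). Moreover \(\bar S\) is the subset-sum set of the \emph{sequence} \(\bar R\) of images of the elements of \(R\setminus H\). The elements of \(R\cap H\) only contribute translations by \(H\), and these are absorbed. The stabiliser of \(\bar S\) in \(G/H\) is trivial. Put \(k=|R\setminus H|\). Because \(|\bar S|\geq1\), it suffices to prove
\[
  |\bar S|\geq 1+\tfrac{k^2}{64|H|}.
\]
The factor \(|H|\) here is exactly the multiplicity issue: each nonzero class of \(G/H\) can occur at most \(|H|\) times in \(\bar R\). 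I would therefore prove a sequence version: if a sequence of \(k\) nonzero elements, with each value repeated at most \(h\) times, has a subset-sum set with trivial stabiliser, then that set has at least \(1+ck^2/h\) elements.

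\emph{Blockwise growth.} Next I would partition \(\bar R\) into about \(h\) subfamilies of pairwise distinct elements. Pigeonhole gives one subfamily \(D\) of size \(\gtrsim k/h\); alternatively, I would handle all subfamilies at once. I would write \(\FS(D)=\sum_{d\in D}\{0,d\}\) and grow the sum one summand at a time. At each stage, Kneser's theorem (Theorem~\ref{thm:kneser}) applies to the current partial set \(X\) and to \(\{0,d\}\). Either \(|X+\{0,d\}|\geq|X|+|\{0,d\}+K|-|K|\), where \(K=\operatorname{stab}(X+\{0,d\})\), or the stabiliser grows.

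The key point is that the stabilisers of the successive partial sums form a chain of subgroups. While the stabiliser is trivial, each new distinct element adds at least one new point. More usefully, I would order the elements so that the \(j\)th added element produces at least about \(j/2\) new sums. This is the standard argument that \(\FS\) of \(j\) distinct elements has size \(\gtrsim j^2\) unless a subgroup absorbs many of them. Summing over \(j\) gives the quadratic bound \(\gtrsim |D|^2\). Combining the subfamilies through one further application of Kneser's theorem to \(\sum_\nu \FS(D_\nu)\) gives \(\gtrsim k^2/h\).

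\emph{Main obstacle.} The hardest step is controlling the intermediate stabilisers. A partial sum can acquire a nontrivial stabiliser \(K\) even though the final set \(\bar S\) has trivial stabiliser. In that case the subsequent elements are counted only modulo \(K\), and the growth can stall. I would first try a double induction. Suppose a partial sum has stabiliser \(K\neq0\). Then either many of the remaining elements lie in \(K\), and the induction hypothesis applied inside \(K\) gives quadratic growth there, or many lie outside \(K\), and induction in \(G/K\) gives quadratic growth of the number of cosets. Either way one gains a quadratic term. The constant \(1/64\) comes from losing a factor \(2\) in each of the halving steps: splitting into large and small blocks, splitting inside versus outside \(K\), and the pigeonhole over multiplicities.
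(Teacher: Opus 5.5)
You have a genuine gap at the core of the argument. Your reduction is sound: pass to \(G/H\), split the image sequence into at most \(|H|\) sets \(D_\nu\) of distinct nonzero classes, and apply Kneser's theorem to \(\bar S=\sum_\nu\FS(D_\nu)\), whose stabiliser is trivial. With Cauchy--Schwarz, this reduces the theorem, with the same constant, to the aperiodic set case \(|\FS(D)|\geq1+|D|^2/64\). The single-subfamily pigeonhole variant would only give order \(k^2/|H|^2\), so you need the all-blocks version.

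However, the obstacle you single out does not exist. We have \(\operatorname{stab}(X)\subseteq\operatorname{stab}(X+Y)\). Every partial sum \(\FS(D')\) with \(D'\subseteq D\), and every \(\FS(D')+\{0,d\}=\FS(D'\cup\{d\})\), is a summand of \(\FS(D)\). So all intermediate stabilisers are trivial, and your double induction over a nontrivial \(K\) treats an empty case. Also, Kneser's theorem is not an either/or statement: its inequality always holds, with \(K\) the stabiliser of the sumset.

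The real gap is the quadratic growth itself. With trivial stabilisers, Kneser's theorem for \(X+\{0,d\}\) gives only \(|X+\{0,d\}|\geq|X|+1\). Iterating yields \(|\FS(D)|\geq|D|+1\) and nothing more. Your claim that \(D\) can be ordered so that the \(j\)th element creates about \(j/2\) new sums is essentially the whole theorem. It would even give the constant \(1/4\) rather than \(1/64\), yet you call it standard and give no proof.

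In \(\Z\), such an ordering comes from the order structure: add the elements increasingly and count chains as in Lemma~\ref{lem:chain-count}. The choice genuinely matters. If you add \(\{r+1,\ldots,r+j\}\) before \(\{1,\ldots,r\}\), each late element creates only \(O(r)\) new sums, however large \(j\) is. In a general abelian group there is no order to exploit. Moreover, \((X+d)\setminus X\subseteq(X+K)\setminus X\) for every subgroup \(K\ni d\). So every remaining element can have a small increment when \(X\) nearly fills \(X+K\) for a subgroup \(K\) containing those elements.

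A proof therefore needs two further pieces:
\begin{enumerate}
\item a lemma producing some remaining element with a large increment unless this near-periodic structure is present;
\item a separate treatment of the near-periodic alternative.
\end{enumerate}
That is the substance of the argument of DeVos, Goddyn, Mohar and {\v S}amal, which the paper imports by citation rather than reproving. Your sketch does not supply it. Your account of \(1/64\) as three halvings is not tied to any inequality you actually prove.
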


Both theorems apply to infinite abelian groups.  We shall use them in the
circle group \(\R/x\Z\).

\subsection{Endpoint states}

For an integer \(L\geq0\) and a finite set
\[
  H\subseteq[1,\lfloor L/2\rfloor]_{\Z},
\]
define
\[
  \cS(L,H)
  =[0,L]_{\Z}\setminus\bigl(H\cup(L-H)\bigr),
\]
where \(L-H=\{L-h:h\in H\}\).  The set \(H\) records the lower endpoint
holes.  Subset-sum supports of positive-integer sets are symmetric about
half their total, so a support containing a central interval has this
form.

We use Lev's central interval theorem.

\begin{theorem}[Lev {\cite[Theorem~1]{Lev1998}}]
\label{thm:lev}
Let \(B\subseteq[1,\ell]_{\Z}\) have \(s\) elements.  If
\[
  \ell\leq\frac{3s}{2}-2,
\]
then
\[
  [2\ell-2s+1,\,
    \sigmaf(B)-(2\ell-2s+1)]_{\Z}
  \subseteq\FS(B).
\]
\end{theorem}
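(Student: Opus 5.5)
Since Theorem~\ref{thm:lev} is quoted from \cite{Lev1998}, the paper uses it as a black box. If I had to prove it, I would argue by induction on \(s\), removing the largest element. First, two normalisations. We may assume \(\ell=\max B\): replacing \(\ell\) by \(\max B\) preserves the hypothesis \(\ell\le 3s/2-2\) and decreases \(k:=2\ell-2s+1\), so the conclusion only gets stronger. Second, \(\FS(B)\) is symmetric under \(u\mapsto\sigmaf(B)-u\). It therefore suffices to show that every integer \(t\) with \(k\le t\le\sigmaf(B)/2\) is a subset sum.

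\emph{Inductive step.} Put \(B'=B\setminus\{\ell\}\), \(\ell'=\max B'\le\ell-1\), and \(k'=2\ell'-2(s-1)+1\le k\). Suppose \(\ell'\le 3(s-1)/2-2\). Then induction gives \([k',\sigmaf(B')-k']_{\Z}\subseteq\FS(B')\). Since \(\FS(B)\supseteq\FS(B')\cup(\ell+\FS(B'))\), the two intervals \([k',\sigmaf(B')-k']\) and \([\ell+k',\sigmaf(B)-k']\) merge into \([k',\sigmaf(B)-k']\) as soon as \(\ell\le\sigmaf(B')-2k'+1\). This holds for \(s\) beyond a small absolute constant, because \(\sigmaf(B')\ge T_{s-1}\) is quadratic while \(\ell\) and \(k'\) are linear in \(s\). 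Small \(s\) are a finite check. Since \(k'\le k\), this gives the claim.

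\emph{Failure of the induction hypothesis.} The hypothesis fails only when \(B\) is essentially saturated at the top (\(\ell'=\ell-1\)) and \(\ell\) sits at, or within \(1/2\) of, the threshold \(3s/2-2\). This tight case is the main obstacle, and I would handle it directly. Here \(B\) misses only \(\ell-s\le s/2-2\) integers of \([1,\ell]\), so it contains many adjacent pairs \(x,x+1\).

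The plan for the tight case is to realise each target \(t\in[k,\sigmaf(B)/2]\) in three steps:
\begin{itemize}
\item Take a \(j\)-subset \(X\) of \(B\), with \(j\) chosen so that the least and greatest \(j\)-subset sums bracket \(t\).
\item Move from the least \(j\)-subset toward the greatest by unit swaps \(x\mapsto x+1\), using \(x\in X\), \(x+1\in B\setminus X\). Each such swap raises the sum by exactly \(1\).
\item Show that the \(j\)-subset sums fill an interval. This holds provided a unit swap is always available until the maximum is reached. A swap is blocked only if every element of \(X\) is followed in \(B\) by another element of \(X\) or by a hole, and the counting bound \(\ell-s\le s/2-2\) on holes shows this forces \(X\) to already be top-heavy.
\end{itemize}
The number of holes also bounds how far the construction can be pushed toward the lower end. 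At the lower end the blockage is caused by holes among the small elements, and the constant \(2\ell-2s+1\) should emerge exactly as the level below which unit swaps can run out (twice the number of holes, plus one). Finally, consecutive values of \(j\) give overlapping intervals in the relevant range, so together they cover \([k,\sigmaf(B)/2]\).

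I expect verifying this swap-availability count, at precisely the threshold \(\ell=3s/2-2\), to be the delicate point. I would first try to settle it by a direct extremal analysis of where the holes can lie.
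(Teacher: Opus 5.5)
The paper gives no proof of this statement. It is quoted from Lev's paper and used only as a black box in the proof of Theorem~\ref{thm:bounded-excess}, so your argument has to stand on its own, and it does not.

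\textbf{What is fine.} Your normalisations and the max-removal step are correct. The merging condition $\ell\le\sigmaf(B')-2k'+1$ in fact holds for every admissible $s$: with $k'\le s-3$ and $\sigmaf(B')\ge T_{s-1}$ it reduces to $s^2-8s+18\ge0$.

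\textbf{The open case carries all the content.} These steps only reduce the theorem to the configuration where the induction hypothesis fails. That configuration is exactly: $s$ even, $\ell=\max B=3s/2-2$, and $\ell-1\in B$. It cannot be removed by choosing a better threshold. Deleting a top element adjacent to the next one keeps the number of holes $\ell-s$ fixed while lowering $s$. So any hypothesis of the form $\ell\le\alpha s-\beta$ with $\alpha>1$ fails again for $B'$. Consequently the whole content of Lev's theorem lies in the case you leave open, and for that case you give a plan, not a proof.

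\textbf{The plan fails at its key claim.} A unit swap $x\mapsto x+1$ is unavailable for $X$ exactly when, for every maximal run $J$ of consecutive integers in $B$, the set $X\cap J$ is a terminal segment of $J$. That is nothing like ``$X$ is top-heavy''. In particular, the least $j$-subset $\{b_1,\ldots,b_j\}$ is blocked from the start whenever $b_j+1\notin B$.

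Take $B=\{1,2,3,5,6,7\}$, which lies in your tight case ($s=6$, $\ell=7$, $6\in B$).
\begin{itemize}
\item For $j=3$ you are stuck at $\{1,2,3\}$ with sum $6$, against a maximum of $18$.
\item For $j=2$, every unit-swap path from $\{1,2\}$ ends at $\{2,3\}$ with sum $5$, against a maximum of $13$.
\item The $j$-subset sums need not form an interval at all: $7$ is not a $3$-subset sum of this $B$. So your covering would have to interleave different $j$ around such gaps.
\end{itemize}

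\textbf{What would be needed.} The tight case allows $s/2-2$ holes, and each one stops a unit-swap chain. You would need exchange moves that jump a hole, for example $x\mapsto x+2$ with $x+1\notin B$, combined with $y\mapsto y-1$ for some $y\in X$ with $y-1\in B\setminus X$. You would also need an actual count showing such moves exist for every target $t\ge 2\ell-2s+1$. That count is where the constant $2\ell-2s+1$ would have to come from, and it is exactly what is missing.
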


We also use the following endpoint results from
\cite[Lemma~3.3 and Theorem~3.4]{Chen2026Boundaries}, in the forms
needed below.

\begin{lemma}[Endpoint self-loop]
\label{lem:endpoint-loop}
Let \(B\subset\Z_{>0}\) be finite, put \(L=\sigmaf(B)\), and suppose
\[
  \FS(B)=\cS(L,H).
\]
Let \(h=\max H\), with \(h=0\) when \(H=\varnothing\).  If
\[
  x>\max B
  \qquad\text{and}\qquad
  h<x<L-2h,
\]
then
\[
  \FS(B\cup\{x\})=\cS(L+x,H)
\]
and
\[
  |(x+\FS(B))\setminus\FS(B)|=x.
\]
\end{lemma}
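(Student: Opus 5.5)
We argue directly from the symmetric form of the support. Write \(S=\FS(B)=\cS(L,H)\), and note that \(S\subseteq[0,L]_{\Z}\) is symmetric under \(s\mapsto L-s\). Adjoining \(x\) gives
\[
  \FS(B\cup\{x\})=S\cup(x+S)\subseteq[0,L+x]_{\Z}.
\]
The target set \(\cS(L+x,H)\) has lower holes \(H\) and upper holes \(L+x-H\). We therefore show that \(S\cup(x+S)\) omits exactly these points of \([0,L+x]_{\Z}\). The count \(|(x+S)\setminus S|=x\) will then follow by comparing cardinalities.

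The first step treats the lower range \([0,x-1]\). There only \(S\) contributes, since \(x+S\subseteq[x,L+x]\). Because \(x>h\), every element of \(H\) lies in this range, and these are precisely the missing points there, provided \(x-1\leq L-h-1\). That inequality holds since \(x<L-2h\leq L-h\). By the symmetry \(s\mapsto L+x-s\), which maps \(S\cup(x+S)\) to itself (it swaps \(S\) with \(x+S\), as \(L-S=S\)), the upper range \([L+1,L+x]\) misses exactly \(L+x-H\). This uses that \(x+S\) covers \([L+1,L+x]\) except at \(L+x-H\), which is the translate of the lower statement.

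The second step, the middle range \([x,L]\), is the main obstacle. A point \(s\) there is missed only if \(s\notin S\) and \(s-x\notin S\). If \(s\notin S\), then \(s\in H\cup(L-H)\); since \(s\geq x>h\), we get \(s=L-h'\) for some \(h'\in H\). Then
\[
  s-x=L-h'-x> L-h'-(L-2h)\geq h,
\]
and \(s-x\leq L-h'-x<L-h'\). So \(s-x\) lies strictly between \(h\) and \(L-h\), hence in the central interval \([h+1,L-h-1]\subseteq S\). Thus every point of the middle range is covered. The inequality \(x<L-2h\) is exactly what makes this work, and I would double-check the edge cases \(H=\varnothing\) and \(h'<h\) here.

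Finally, we compute the counts. We have \(|\cS(L,H)|=L+1-2|H|\) (the sets \(H\) and \(L-H\) are disjoint since \(h\leq L/2\); when \(2h=L\) the hypothesis \(h<x<L-2h\) is void). Likewise \(|\cS(L+x,H)|=L+x+1-2|H|\). Their difference gives \(|(x+S)\setminus S|=x\). Finally, \(H\subseteq[1,\lfloor (L+x)/2\rfloor]\) holds trivially, so \(\cS(L+x,H)\) is well defined. The hypothesis \(x>\max B\) serves only to place the statement in the increment setting and is not needed in this argument.
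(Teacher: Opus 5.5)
Your three-range argument is correct. The paper itself gives no proof of this lemma: it imports it from \cite[Lemma~3.3]{Chen2026Boundaries}, so your direct verification makes the statement self-contained. The four steps all work:
\begin{enumerate}
\item The lower range \([0,x-1]_{\Z}\) misses exactly \(H\), because \(x>h\) and \(x<L-h\).
\item The upper range follows by the reflection \(s\mapsto L+x-s\), which swaps \(\FS(B)\) and \(x+\FS(B)\).
\item The middle range is covered by the key estimate \(s-x>2h-h'\geq h\), which is exactly where \(x<L-2h\) enters.
\item The count \(|(x+\FS(B))\setminus\FS(B)|=x\) is a cardinality comparison. Disjointness of \(H\) and \(L-H\) holds because \(h<x<L-2h\) forces \(3h<L\).
\end{enumerate}

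There are two small corrections.

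First, in the middle range, the bound \(s-x<L-h'\) does not give \(s-x<L-h\) when \(h'<h\). Instead use \(s\leq L\) and \(x\geq h+1\), which give \(s-x\leq L-h-1\) immediately. This settles the case you flagged.

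Second, your closing remark is wrong. The identity \(\FS(B\cup\{x\})=\FS(B)\cup(x+\FS(B))\) requires \(x\notin B\), and that is exactly what \(x>\max B\) supplies. For example, take \(B=\{1,2,3,4\}\), \(H=\varnothing\) and \(x=3\). These satisfy \(h<x<L-2h\), yet \(\FS(B\cup\{x\})=[0,10]_{\Z}\neq\cS(13,\varnothing)\). Only the second conclusion, \(|(x+\FS(B))\setminus\FS(B)|=x\), is independent of that hypothesis.

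You also tacitly take \(x\in\Z\) when you work with the integer intervals \([0,x-1]_{\Z}\) and \([L+1,L+x]_{\Z}\). The statement requires this implicitly, since \(\cS(L+x,H)\) is only defined for integer \(L+x\).
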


For \(q=|H|\), \(E\geq0\), and the same convention for \(h\), put
\begin{equation}
\label{eq:M-def}
  M(h,q,E)=
  \max\left\{
  h,\,
  1+\left\lfloor
  \frac{1+\sqrt{9+8(E+2q+2h)}}{2}
  \right\rfloor
  \right\}.
\end{equation}

\begin{theorem}[Bounded-excess endpoint bridge]
\label{thm:endpoint-bridge}
Let \(B_m=\{b_1<\cdots<b_m\}\subset\Z_{>0}\), put
\(L_m=\sigmaf(B_m)\), and suppose
\[
  \FS(B_m)=\cS(L_m,H),\qquad 2h<L_m.
\]
Let \(m\geq M(h,q,E)\), and let \(x>b_m\).  If
\[
  e_x=|(x+\FS(B_m))\setminus\FS(B_m)|-(m+1)
  \in\{0,\ldots,E\},
\]
then
\[
  x=m+1+e_x
\]
and
\[
  \FS(B_m\cup\{x\})=\cS(L_m+x,H).
\]
Conversely, every \(e\in\{0,\ldots,E\}\) for which
\(m+1+e>b_m\) gives this transition.
\end{theorem}

\section{Bounded increment excess}
\label{sec:bounded-excess}

We begin with a group-theoretic rigidity lemma.

\begin{lemma}
\label{lem:group-stability}
Let \(G\) be an abelian group, let
\[
  R\subseteq G\setminus\{0\},\qquad |R|=m,
\]
and let \(s\geq1\).  Suppose that
\[
  |\FS(R)|\leq m+s,\qquad
  m\geq s+1,\qquad
  m^2>64(m+s-1).
\]
Then there is a finite subgroup \(H\leq G\) such that
\[
  \FS(R)=H,\qquad R\subseteq H,
\]
and
\[
  m+1\leq |H|\leq m+s.
\]
Thus \(R\) is obtained from \(H\setminus\{0\}\) by deleting at most
\(s-1\) elements.
\end{lemma}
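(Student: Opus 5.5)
Let \(H=\operatorname{stab}(\FS(R))\). The first step is to show that \(H\) cannot be small, using Theorem~\ref{thm:devos}. Since \(\FS(R)\) is a union of \(H\)-cosets and contains \(0\), we have \(H\subseteq\FS(R)\), so \(|H|\leq m+s\) and \(H\) is finite. Suppose \(R\not\subseteq H\). Then Theorem~\ref{thm:devos} gives
\[
  m+s\geq|\FS(R)|\geq|H|+\tfrac1{64}|R\setminus H|^2\geq 1+\tfrac1{64}|R\setminus H|^2 .
\]
If \(R\cap H=\varnothing\), this says \(64(m+s-1)\geq m^2\), which contradicts the hypothesis. In general \(R\cap H\) may be nonempty, so I would instead apply Kneser's theorem (Theorem~\ref{thm:kneser}) to the sumset \(\FS(R)=\sum_{r\in R}\{0,r\}\). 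Each summand with \(r\notin H\) satisfies \(|\{0,r\}+H|=2|H|\), and each summand with \(r\in H\) satisfies \(|\{0,r\}+H|=|H|\). Writing \(k=|R\setminus H|\), Kneser gives
\[
  |\FS(R)|\geq 2k|H|+(m-k)|H|-(m-1)|H|=(k+1)|H|.
\]

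Next I split into cases. If \(k=0\), then \(R\subseteq H\), so \(\FS(R)\subseteq H\), and combined with \(H\subseteq\FS(R)\) this gives \(\FS(R)=H\). If \(k\geq1\), I combine the two lower bounds. When \(|H|\) is large, the Kneser bound \((k+1)|H|\) is large: in particular, if \(|H|\geq m-k+1\), then \((k+1)(m-k+1)\leq m+s\). For \(1\leq k\leq m\) the left side is at least \(2m\) (it is concave in \(k\), and its minimum \(2m\) occurs at the endpoints \(k=1\) and \(k=m\)), so \(2m\leq m+s\), contradicting \(m\geq s+1\). Otherwise \(|H|\leq m-k\), which means \(|R\cap H|\leq|H|-1\leq m-k-1\) since \(0\notin R\). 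But \(|R\cap H|=m-k\), a contradiction. So \(k\geq1\) is impossible in both subcases, and the DeVos bound is not even needed.

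The Kneser case analysis is the main point to check, and it looks clean: \(R\cap H\subseteq H\setminus\{0\}\) forces \(m-k\leq|H|-1\), so \(|H|\geq m-k+1\) always holds and only the first subcase occurs. In that subcase \((k+1)(m-k+1)\geq 2m>m+s\). The hypothesis \(m^2>64(m+s-1)\) then seems superfluous. I would double-check whether the stabiliser form of Kneser applies to multiple summands exactly as stated in Theorem~\ref{thm:kneser}. If it does, the DeVos input is only a safety net, used to get \(R\subseteq H\) when \(R\cap H=\varnothing\).

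Finally, with \(R\subseteq H\) and \(\FS(R)=H\), we have \(|H|\leq m+s\), and \(R\subseteq H\setminus\{0\}\) gives \(|H|\geq m+1\). Hence \(R\) is \(H\setminus\{0\}\) with \(|H|-1-m\leq s-1\) elements deleted.
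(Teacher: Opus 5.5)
Your Kneser step contains an arithmetic error that matters. The function $f(k)=(k+1)(m-k+1)$ does satisfy $f(1)=f(m-1)=2m$. However, $f(m)=(m+1)\cdot 1=m+1$, not $2m$. So for $k=m$, i.e.\ $R\cap H=\varnothing$, the inequality $f(k)\le m+s$ gives no contradiction.

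This case cannot be dismissed. If the stabiliser is trivial, then $k=m$ automatically because $0\notin R$, and the Kneser bound collapses to the trivial $|\FS(R)|\geq m+1$. Kneser does handle $k=m$ when $|H|\ge 2$, since then $(m+1)|H|\ge 2m+2$; the uncovered configuration is exactly $H=\{0\}$. Consequently your conclusions that the DeVos bound is ``not even needed'' and that $m^2>64(m+s-1)$ is superfluous are false. For instance, take $R=\{1,-1\}\subset\Z$ with $m=2$ and $s=1$. Then $\FS(R)=\{-1,0,1\}$ has size $m+s$ and $m\geq s+1$ holds, but $\FS(R)$ is not a subgroup. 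The set $R=\{\pm1,\pm2\}$ with $m=4$, $s=3$ behaves the same way.

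The repair is already in your first paragraph. When $R\cap H=\varnothing$, Theorem~\ref{thm:devos} gives $m+s\geq 1+m^2/64$, which contradicts the quadratic hypothesis. For $1\le k\le m-1$, concavity between $f(1)$ and $f(m-1)$ correctly gives $f(k)\ge 2m>m+s$. Treat the DeVos step as an essential case rather than a safety net, and your proof is complete.

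It is then essentially the paper's argument. The paper uses DeVos to exclude $|H|=1$, then uses Kneser together with $|R\cap H|\le|H|-1$ to show that $\FS(R)$ is a single coset of $H$, hence equals $H$. Splitting on $R\cap H=\varnothing$ instead of on $|H|=1$ is an inessential variation.
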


\begin{proof}
Put
\[
  H=\operatorname{stab}(\FS(R)),\qquad h=|H|,
  \qquad \rho=|\FS(R)|.
\]
The group \(H\) is finite because it is contained in the finite difference
set \(\FS(R)-\FS(R)\).  If \(h=1\), Theorem~\ref{thm:devos} gives
\[
  \rho\geq1+\frac{m^2}{64}>m+s,
\]
contrary to the hypothesis.  Hence \(h>1\).

Let \(u=|R\cap H|\), and write \(\rho=hq\).  Applying
Theorem~\ref{thm:kneser} to the \(m\) sets \(\{0,r\}\), \(r\in R\),
gives
\[
  \rho\geq h(1+|R\setminus H|)=h(m-u+1).
\]
Since the elements of \(R\cap H\) are nonzero and distinct,
\[
  u\leq h-1.
\]
It follows that
\[
  q\geq m-h+2,\qquad h+q\geq m+2.
\]
If \(q\geq2\), then
\[
  hq\geq2(h+q)-4\geq2m.
\]
On the other hand,
\[
  hq=\rho\leq m+s\leq2m-1,
\]
where the last inequality uses \(m\geq s+1\).  This is a contradiction.
Therefore \(q=1\).  Since \(\FS(R)\) is \(H\)-periodic, contains \(0\),
and has size \(h\), we have \(\FS(R)=H\).  In particular, \(R\subseteq H\).
Finally,
\[
  m+1\leq|\FS(R)|=h\leq m+s,
\]
because \(\{0\}\cup R\subseteq\FS(R)\).
\end{proof}

For \(E\in\Z_{\geq0}\), define
\begin{equation}
\label{eq:I-E}
  I_E=
  \max\left\{
  2E+5,\,
  1+\left\lfloor33+8\sqrt{E+16}\right\rfloor
  \right\}.
\end{equation}
The second term is the least integer strictly larger than the positive
root of
\[
  i^2-66i+65-64E=0.
\]

\begin{theorem}
\label{thm:bounded-excess}
Let \(E\in\Z_{\geq0}\), let \(i\geq I_E\), and let
\[
  A_i=\{a_1<\cdots<a_i\}\subset\R_{>0}.
\]
Then, for every \(e\in\{0,\ldots,E\}\), the equality \(y_i=e\) holds if
and only if there is a unique pair \((t,D)\) with \(t>0\) and
\[
  D\subseteq[i+e-1],\qquad |D|=e,
\]
such that
\begin{equation}
\label{eq:deleted-interval-form}
  A_i=t\bigl([i+e]\setminus D\bigr).
\end{equation}
In this representation \(a_i=t(i+e)\).
\end{theorem}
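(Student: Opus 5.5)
Set \(x=a_i\) and \(S=S_{i-1}=\FS(A_{i-1})\), so that \(d_i=|(x+S)\setminus S|\) is the number of maximal chains of difference \(x\) in \(S\), as in Lemma~\ref{lem:chain-count}. For the forward direction I would pass to the circle \(G=\R/x\Z\). Let \(R\) be the image of \(\{a_1,\ldots,a_{i-1}\}\) in \(G\). Since \(0<a_j<x\), \(R\) consists of \(m=i-1\) distinct nonzero residues, and \(\FS(R)\) is the image of \(S\).

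Every chain lies in a single residue class, and every class met by \(S\) contains at least one chain. Hence \(\rho:=|\FS(R)|\leq d_i=i+e\). Also \(\rho\geq i\), because \(0,a_1,\ldots,a_{i-1}\) have distinct residues.

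Now apply Lemma~\ref{lem:group-stability} with \(m=i-1\) and \(s=e+1\). Its hypotheses read \(i-1\geq e+2\), guaranteed by \(i\geq 2E+5\), and \((i-1)^2>64(i+e-1)\), that is \(i^2-66i+65-64e>0\), which is exactly what the second term of \(I_E\) secures. The lemma gives \(\FS(R)=H\), a finite subgroup of order \(h\) with \(i\leq h\leq i+e\). Finite subgroups of \(\R/x\Z\) are cyclic, so \(H\) is generated by \(x/h\). With \(t=x/h\), every \(a_j\) is a multiple of \(t\) lying in \((0,x)\). Therefore
\[
  A_{i-1}=tB,\qquad B\subseteq[h-1],\ |B|=i-1,
\]
with \(\delta:=h-i\leq e\) deleted elements, and \(x=th\).

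It remains to prove \(h=i+e\), i.e. \(d_i=h\), which says that each residue class carries exactly one chain. This is the step I expect to be the real content. I would handle it with Lev's theorem followed by the endpoint self-loop.

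First, Theorem~\ref{thm:lev} applies to \(B\subseteq[1,h-1]_{\Z}\) with \(s=i-1\). The condition \(h-1\leq\frac32(i-1)-2\) reduces to \(i\geq 2\delta+3\), which holds because \(i\geq 2E+5\). It yields
\[
  [2\delta+1,\sigmaf(B)-2\delta-1]_{\Z}\subseteq\FS(B).
\]
By the symmetry of subset-sum supports, \(\FS(B)=\cS(\sigmaf(B),H')\) with \(H'\subseteq[1,2\delta]\).

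Second, apply Lemma~\ref{lem:endpoint-loop} with the new element \(h\). We have \(h>\max B\), \(\max H'\leq 2\delta<h\), and \(h<\sigmaf(B)-4\delta\) since \(\sigmaf(B)\) is quadratic in \(i\). The lemma gives \(|(h+\FS(B))\setminus\FS(B)|=h\). Scaling by \(t\) gives \(d_i=h\), so \(e=y_i=h-i=\delta\). Thus
\[
  A_i=t\bigl([i+e]\setminus D\bigr),\qquad |D|=e,\qquad a_i=t(i+e),
\]
and \(D\subseteq[i+e-1]\) because the top element \(i+e\) is present.

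For uniqueness, observe that any representation \eqref{eq:deleted-interval-form} has \(\max=t(i+e)\). So \(t=a_i/(i+e)\) is forced, and \(D\) is then determined as the complement.

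For the converse, start from \(A_i=t([i+e]\setminus D)\) with \(|D|=e\leq E\) and \(D\subseteq[i+e-1]\). Then \(A_{i-1}=tB\) with \(B\subseteq[i+e-1]\), \(|B|=i-1\), and \(a_i=t(i+e)\). The same Lev and self-loop computation, now with \(\delta=e\), gives \(d_i=i+e\), that is \(y_i=e\).

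I would double-check the inequality bookkeeping: Lev's hypothesis and the self-loop condition \(h<\sigmaf(B)-2\max H'\) must both hold for all \(i\geq I_E\). The first needs only \(i\geq 2e+3\). The second needs \(\sigmaf(B)\geq T_{i-1}>i+5e\), which is comfortable in this range.
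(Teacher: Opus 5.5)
Your proposal is correct and follows the paper's proof essentially step for step: you pass to \(\R/x\Z\), use Lemma~\ref{lem:group-stability} to make \(\FS(R)\) the cyclic subgroup of order \(h\), apply Lev's theorem and the endpoint self-loop to get \(d_i=h\), and obtain uniqueness from \(t=a_i/(i+e)\); taking \(s=e+1\) instead of the paper's \(s=E+1\) is harmless. One small arithmetic slip: Lev's hypothesis \(i+\delta-1\leq\frac32(i-1)-2\) is equivalent to \(i\geq2\delta+5\), not \(i\geq2\delta+3\), but this is still guaranteed by \(i\geq2E+5\geq2\delta+5\).
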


\begin{proof}
Suppose first that \(y_i=e\leq E\), and put \(x=a_i\).  In the circle
group
\[
  G=\R/x\Z,
\]
consider
\[
  R=\{a_1,\ldots,a_{i-1}\}\pmod{x}.
\]
The set \(R\) has \(i-1\) distinct nonzero elements.  Write
\[
  \rho=|\FS(R)|.
\]
The residues \(0,a_1,\ldots,a_{i-1}\) are distinct, so \(\rho\geq i\).
Each represented residue contains at least one maximal \(x\)-chain of
\(S_{i-1}\).  Lemma~\ref{lem:chain-count} therefore gives
\[
  i\leq\rho\leq d_i=i+e\leq i+E.
\]

Apply Lemma~\ref{lem:group-stability} with
\[
  m=i-1,\qquad s=E+1.
\]
The inequality \(i\geq2E+5\) implies \(m\geq s+1\), while the second term
in the maximum defining \eqref{eq:I-E} gives
\[
  (i-1)^2>64(i+E-1)=64(m+s-1).
\]
Hence \(\FS(R)=H\) for a finite subgroup \(H\leq G\), with
\[
  i\leq h:=|H|\leq i+E,
\]
and \(R\subseteq H\).

The circle group has a unique subgroup of order \(h\), namely
\[
  H=
  \left\{
  0,\frac{x}{h},\frac{2x}{h},\ldots,\frac{(h-1)x}{h}
  \right\}\pmod{x}.
\]
Let \(t=x/h\) and \(\delta=h-i\).  Taking the representatives in
\((0,x)\), we find a set
\[
  D\subseteq[h-1],\qquad |D|=\delta,
\]
such that
\begin{equation}
\label{eq:pre-delta-form}
  A_i=t\bigl([h]\setminus D\bigr).
\end{equation}
We have \(0\leq\delta\leq E\); it remains to show that \(\delta=e\).

Set
\[
  C=[h-1]\setminus D,\qquad |C|=i-1,\qquad L=\sigmaf(C).
\]
Since \(i\geq2E+5\geq2\delta+5\),
\[
  h-1=i+\delta-1\leq\frac{3(i-1)}2-2.
\]
Theorem~\ref{thm:lev} gives
\[
  [2\delta+1,L-(2\delta+1)]_{\Z}\subseteq\FS(C).
\]
By subset-sum symmetry,
\begin{equation}
\label{eq:old-deletion-state}
  \FS(C)=\cS(L,H_D)
  \quad\text{for some}\quad
  H_D\subseteq[1,2\delta]_{\Z}.
\end{equation}

Let \(r=\max H_D\), with \(r=0\) if \(H_D=\varnothing\).  Then \(r<h\).
Moreover,
\[
  L\geq T_{i-1}>h+4\delta\geq h+2r.
\]
The strict inequality \(T_{i-1}>h+4\delta\) follows at the smallest
permitted value \(i=2\delta+5\) from
\[
  T_{i-1}-(h+4\delta)\geq2\delta^2+2\delta+5.
\]
Lemma~\ref{lem:endpoint-loop}, applied with the new element \(h\), now
gives
\[
  |\FS(C\cup\{h\})\setminus\FS(C)|=h.
\]
Scaling by \(t\) and using \eqref{eq:pre-delta-form}, we obtain
\[
  d_i=h=i+\delta.
\]
Thus \(\delta=y_i=e\), which proves \eqref{eq:deleted-interval-form}.

Conversely, suppose \eqref{eq:deleted-interval-form} holds for
\(D\subseteq[i+e-1]\) of size \(e\).  Applying Lev's theorem and
Lemma~\ref{lem:endpoint-loop} as above, with
\[
  h=i+e,\qquad \delta=e,
\]
shows that adjoining \(t(i+e)\) creates \(i+e\) new sums.  Hence
\(y_i=e\).

Finally, \(h=i+e\) and \(t=a_i/h\) are determined by \(A_i\) and \(e\);
the set \(D\) is then recovered from \(t^{-1}A_i\).  This proves
uniqueness.
\end{proof}

\begin{corollary}
\label{cor:bounded-excess-state}
Under the hypotheses of Theorem~\ref{thm:bounded-excess}, put
\[
  h=i+e,\qquad C=[h-1]\setminus D,\qquad L=\sigmaf(C).
\]
Then
\[
  \FS(C)=\cS(L,H_D),\qquad
  \FS(C\cup\{h\})=\cS(L+h,H_D)
\]
for a canonically determined set
\[
  H_D\subseteq[1,2e]_{\Z}.
\]
\end{corollary}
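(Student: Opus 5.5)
The corollary only repackages the two support identities obtained inside the proof of Theorem~\ref{thm:bounded-excess}. I will redo that argument with \(\delta=e\), which is legitimate because the theorem has already shown \(\delta=y_i=e\). Put \(h=i+e\). Since \(i\geq I_E\geq2E+5\geq2e+5\), the set \(C=[h-1]\setminus D\) has \(s=i-1\) elements inside \([1,\ell]_{\Z}\) with \(\ell=h-1=i+e-1\), and
\[
\ell\leq\tfrac{3(i-1)}{2}-2 .
\]
Theorem~\ref{thm:lev} then places the central interval \([2e+1,L-(2e+1)]_{\Z}\) inside \(\FS(C)\).

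Next I use the symmetry \(X\mapsto C\setminus X\). It shows that \(\FS(C)\) is invariant under \(u\mapsto L-u\). The sums missing from \(\FS(C)\) therefore lie in \([1,2e]_{\Z}\cup[L-2e,L-1]_{\Z}\), and the two families correspond to each other under reflection. Note that \(0\) and \(L\) are always present. Define
\[
H_D=[1,2e]_{\Z}\setminus\FS(C).
\]
This set depends only on \(D\) and \(e\), so it is canonical. To conclude \(\FS(C)=\cS(L,H_D)\) I must also check \(H_D\subseteq[1,\lfloor L/2\rfloor]_{\Z}\). This follows from \(L\geq T_{i-1}>4e\).

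For the second identity I apply Lemma~\ref{lem:endpoint-loop} with new element \(x=h\). The hypothesis \(x>\max C\) holds because \(\max C\leq h-1\). Let \(r=\max H_D\leq 2e\), with \(r=0\) when \(H_D=\varnothing\). Then \(r<h\) since \(2e<i+e\). The remaining condition \(h<L-2r\) follows from \(L\geq T_{i-1}>h+4e\geq h+2r\), which is exactly the inequality verified in the theorem's proof. The lemma then gives
\[
\FS(C\cup\{h\})=\cS(L+h,H_D).
\]

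There is no real obstacle here. The only care needed is the bookkeeping: confirming that the bound \(L>h+4e\) holds for all \(i\geq I_E\), not only at the smallest value \(i=2e+5\). This holds because \(T_{i-1}\) grows quadratically in \(i\) while \(h+4e\) grows only linearly.
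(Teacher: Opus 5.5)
Your proof is correct and follows the paper's own route. The corollary is read off from the proof of Theorem~\ref{thm:bounded-excess} with \(\delta=e\): Lev's theorem together with reflection symmetry gives the first identity, and Lemma~\ref{lem:endpoint-loop} together with \(T_{i-1}>h+4e\) gives the second. Your monotonicity remark also fills in a step the paper checks only at \(i=2e+5\).
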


\begin{remark}
\label{rem:bounded-excess-sharp}
The deletion count in Theorem~\ref{thm:bounded-excess} is exact.  For
example,
\[
  [i-1]\cup\{i+e\}
\]
has increment excess \(e\) once \(i\geq2e+5\).  The threshold
\eqref{eq:I-E} is explicit.  When \(E=0\),
Theorem~\ref{thm:cdk-equality} gives the sharper range \(i\geq4\).
\end{remark}

\begin{remark}
\label{rem:small-obstruction}
Some lower bound depending on \(E\) is necessary.  If \(E\geq1\),
\(i=E+2\), and \(M\geq3\), then
\[
  \{1,M,2M,\ldots,(E+1)M\}
\]
has increment excess \(E\), but it is not a dilation of an interval with
\(E\) terms deleted.
\end{remark}

\subsection{Tails of bounded excess}

The local classification also constrains successive prefixes with
bounded excess.

\begin{corollary}
\label{cor:bounded-tail-rigidity}
Fix \(E\in\Z_{\geq0}\), and let
\[
  A=\{a_1<\cdots<a_n\}\subset\R_{>0}.
\]
Suppose that \(i_0\geq I_E\) and
\[
  0\leq y_j\leq E\qquad(i_0\leq j\leq n).
\]
Then there exist a common \(t>0\), a nondecreasing integer sequence
\[
  0\leq e_{i_0}\leq e_{i_0+1}\leq\cdots\leq e_n\leq E,
\]
and sets
\[
  D_j\subseteq[j+e_j-1],\qquad |D_j|=e_j,
\]
such that
\[
  A_j=t\bigl([j+e_j]\setminus D_j\bigr)
  \qquad(i_0\leq j\leq n).
\]
Writing \(h_j=j+e_j\), one has
\begin{equation}
\label{eq:deletion-evolution}
  D_j
  =
  D_{j-1}\cup[h_{j-1}+1,h_j-1]_{\Z}
  \qquad(i_0<j\leq n).
\end{equation}
In particular, the sequence \(e_j\) has at most \(E\) strict increases,
and after its last increase the elements of \(A\) form consecutive
multiples of \(t\).
\end{corollary}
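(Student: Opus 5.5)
Apply Theorem~\ref{thm:bounded-excess} separately at each index \(j\in[i_0,n]\), which is allowed because \(j\geq i_0\geq I_E\) and \(y_j\leq E\). This gives \(t_j>0\) and \(D_j\subseteq[j+e_j-1]\) with \(|D_j|=e_j:=y_j\) and \(A_j=t_j([h_j]\setminus D_j)\), where \(h_j=j+e_j\) and \(a_j=t_jh_j\). The work then lies in comparing consecutive indices \(j-1,j\) with \(i_0<j\leq n\) and showing three things: \(t_j=t_{j-1}\), \(e_j\geq e_{j-1}\), and \eqref{eq:deletion-evolution} holds.

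For the common scale, note that \(A_{j-1}\subseteq A_j\), so \(A_{j-1}\) lies both in \(t_{j-1}\Z\) and in \(t_j\Z\). Within each representation, the smallest gap between consecutive elements is exactly the scale. Indeed, \(A_{j-1}=t_{j-1}([h_{j-1}]\setminus D_{j-1})\) has at least \(j-1-2e_{j-1}\geq 2\) adjacent pairs of consecutive integers, since \(j-1\geq I_E-1\geq 2E+4\) and there are only \(e_{j-1}\leq E\) deletions. So \(t_{j-1}\) is the minimal gap of \(A_{j-1}\). The same argument applied to the prefix \(A_{j-1}\subseteq A_j\) shows that it also contains consecutive multiples of \(t_j\), so its minimal gap is \(t_j\). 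Hence \(t_j=t_{j-1}\), and by induction there is a single \(t\).

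With the common \(t\), both \(t^{-1}A_{j-1}=[h_{j-1}]\setminus D_{j-1}\) and \(t^{-1}A_j=[h_j]\setminus D_j\) are integer sets, and the second is the first with \(h_j\) adjoined. Since \(h_j=t^{-1}a_j>t^{-1}a_{j-1}=h_{j-1}\), we get \(j+e_j>j-1+e_{j-1}\), that is \(e_j\geq e_{j-1}\). Next compare the two sets inside \([h_j]\). Every integer in \([h_{j-1}+1,h_j-1]_{\Z}\) is missing from \(t^{-1}A_j\). Every integer in \([h_{j-1}]\) is present in \(t^{-1}A_j\) exactly when it is present in \(t^{-1}A_{j-1}\). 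This yields \(D_j=D_{j-1}\cup[h_{j-1}+1,h_j-1]_{\Z}\). As a consistency check, the cardinality count \(e_j=e_{j-1}+(h_j-h_{j-1}-1)\) is the same relation \(h_j-h_{j-1}=1+e_j-e_{j-1}\).

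The nondecreasing sequence \(e_j\) takes integer values in \([0,E]\), so it has at most \(E\) strict increases. After the last increase \(h_j-h_{j-1}=1\), so the remaining elements \(a_j=th_j\) are consecutive multiples of \(t\).

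The main obstacle is the scale identification. Theorem~\ref{thm:bounded-excess} gives uniqueness of \((t,D)\) only once \(e\) is fixed, so the scales at different indices must be tied together separately. The minimal-gap argument does this, and it needs the lower bound \(I_E\geq 2E+5\) to guarantee at least one surviving consecutive pair.
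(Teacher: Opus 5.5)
Your proof is correct. Its overall structure matches the paper's: apply Theorem~\ref{thm:bounded-excess} at every index, identify the scales, get $e_j\geq e_{j-1}$ from $h_j>h_{j-1}$, and compare the two integer sets to obtain \eqref{eq:deletion-evolution}. The difference lies in the one nontrivial step, the identification $t_j=t_{j-1}$.

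The paper argues by divisibility:
\begin{itemize}
\item it first shows that each coefficient set $[j+e_j]\setminus D_j$ has greatest common divisor one, since a divisor $d\geq2$ would leave room for at most half of $[j+e_j]$;
\item it then deduces from $A_{j-1}\subseteq t_j\Z$ that $q=t_{j-1}/t_j$ is a positive integer;
\item finally it rules out $q\geq2$, because $[j+e_j]$ contains at most $\lfloor (j+E)/2\rfloor<j-1$ multiples of $q$.
\end{itemize}

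You instead use an intrinsic invariant: the minimal gap between consecutive elements of $A_{j-1}$. This gap equals the scale in any representation whose coefficient set contains two consecutive integers. Both representations of $A_{j-1}$ have such a pair:
\begin{itemize}
\item $t_{j-1}([h_{j-1}]\setminus D_{j-1})$;
\item $t_j([h_j-1]\setminus D_j)$, obtained by removing $a_j$ from $A_j$.
\end{itemize}
In each, only at most $E$ integers are deleted from an interval holding $j-1\geq 2E+5$ elements.

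Your route compares the two scales directly and avoids the integrality step. The paper's route also records that the coefficient sets are primitive, and the same observation reappears in Proposition~\ref{prop:endpoint-entry} to show that the scale of an integer prefix is an integer.

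One small slip: $[h_{j-1}]\setminus D_{j-1}$ contains at least $j-2-e_{j-1}$ adjacent pairs, not $j-1-2e_{j-1}$. There are $h_{j-1}-1$ pairs in $[h_{j-1}]$, and each deletion destroys at most two. Your bound overcounts by one when $e_{j-1}=0$. This is harmless, since you only need one surviving pair.
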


\begin{proof}
Apply Theorem~\ref{thm:bounded-excess} to each prefix.  It gives
\[
  A_j=t_j\bigl([j+e_j]\setminus D_j\bigr),
  \qquad e_j=y_j.
\]
For each \(j\), the coefficient set \([j+e_j]\setminus D_j\) has greatest
common divisor one.  Indeed, if a common divisor were at least two, it would
contain at most half of
\([j+e_j]\), whereas it has \(j\) elements and \(j>e_j\).

Since \(A_{j-1}\subseteq t_j\Z\) and the coefficients of
\(t_{j-1}^{-1}A_{j-1}\) have greatest common divisor one, the quotient
\(t_{j-1}/t_j\) is a positive integer \(q\).  If \(q\geq2\), then the
\(j-1\) elements of \(A_{j-1}\) would correspond to distinct multiples
of \(q\) in \([j+e_j]\).  There are at most
\[
  \left\lfloor\frac{j+e_j}{2}\right\rfloor
  \leq\left\lfloor\frac{j+E}{2}\right\rfloor
  <j-1
\]
such multiples, because \(j\geq i_0+1\geq2E+6\).  This is impossible.
Thus \(q=1\), so \(t_j=t_{j-1}\).  Denote the common scale by \(t\).

The new coefficient \(h_j=j+e_j\) is larger than every coefficient at the
preceding step, so \(h_j>h_{j-1}\).  Hence
\[
  e_j\geq e_{j-1}.
\]
The passage from \(A_{j-1}\) to \(A_j\) adds only \(th_j\).  Every integer
strictly between \(h_{j-1}\) and \(h_j\) must therefore be deleted at
stage \(j\), while the deletion pattern below \(h_{j-1}\) is unchanged.  This
is precisely \eqref{eq:deletion-evolution}.  Finally, a nondecreasing
integer sequence in \([0,E]\) has at most \(E\) strict increases.  Once
\(e_j\) is constant, \(h_j-h_{j-1}=1\), so the subsequent elements are
consecutive multiples of \(t\).
\end{proof}

\begin{remark}
\label{rem:tail-endpoint-state}
The new blocks in \eqref{eq:deletion-evolution} lie above
\(h_{i_0}\), whereas every lower endpoint hole furnished by
Corollary~\ref{cor:bounded-excess-state} lies in \([1,2E]\).  Thus the
deletions below \(h_{i_0}\), and hence the endpoint state, remain fixed
throughout the tail.  The endpoint bridge in
Section~\ref{sec:preliminaries} describes the corresponding persistence
of the endpoint state of the subset-sum supports.
\end{remark}

\section{Effective commensurability and bounded-excess tails}
\label{sec:commensurability}

We first derive effective commensurability from the local classification.

\begin{lemma}
\label{lem:first-rank-jump}
Suppose that \(A=\{a_1<\cdots<a_n\}\subset\R_{>0}\) is
noncommensurable, and let \(r\) be the least index for which \(A_r\) is
noncommensurable.  If \(r\geq4\), then
\[
  y_r=\frac{(r-1)(r-2)}{2}+Y_{r-1}.
\]
Moreover,
\[
  y_i\geq1\qquad(i>r,\ i\geq4).
\]
\end{lemma}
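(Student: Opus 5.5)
The plan is to exploit the $\Q$-linear structure directly. Since $r$ is the least index with $A_r$ noncommensurable, the prefix $A_{r-1}$ is commensurable, say $A_{r-1}\subseteq t\Z$ for some $t>0$, while $a_r\notin \Q t$. Consequently $S_{r-1}=\FS(A_{r-1})\subseteq t\Z$, and the translate $a_r+S_{r-1}$ lies in the coset $a_r+t\Q$. That coset is disjoint from $t\Q$. Hence $S_r\setminus S_{r-1}=a_r+S_{r-1}$ in full, and
\[
  d_r=|S_{r-1}|=1+T_{r-1}+Y_{r-1}
\]
by \eqref{eq:prefix-cardinality}. We then obtain
\[
  y_r=d_r-r=1+\tfrac{(r-1)r}{2}-r+Y_{r-1}=\tfrac{(r-1)(r-2)}{2}+Y_{r-1},
\]
which is the first claim. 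The hypothesis $r\geq4$ is not needed for this identity. Note that $r\geq2$ automatically, since a singleton is commensurable.

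For the second claim, take $i>r$ with $i\geq4$. Then $A_i\supseteq A_r$ is noncommensurable. If $y_i=0$, then \eqref{eq:zero-excess}, which rests on Theorem~\ref{thm:cdk-equality} and is valid for $i\geq4$, gives $A_i=t[i]$, a commensurable set. This is a contradiction. Since $y_i\geq0$ by Lemma~\ref{lem:chain-count}, we conclude $y_i\geq1$.

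There is no serious obstacle here. The only point needing care is the coset disjointness: every element of $S_{r-1}$ lies in $t\Q$, while every element of $a_r+S_{r-1}$ differs from $a_r$ by an element of $t\Q$ and so lies outside $t\Q$. This yields the exact count of new sums, rather than merely a lower bound.
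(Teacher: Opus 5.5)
Your proof is correct and matches the paper's argument: the coset disjointness of \(S_{r-1}\subseteq t\Q\) and \(a_r+S_{r-1}\) gives \(d_r=|S_{r-1}|\), and the second claim follows because every later prefix remains noncommensurable, which together with \eqref{eq:zero-excess} and \(y_i\geq0\) rules out \(y_i=0\). Your observation that the hypothesis \(r\geq4\) is not needed for the identity is also accurate; the paper's proof does not use it there either.
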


\begin{proof}
All elements of \(S_{r-1}\) lie on one \(\Q\)-line.
The translate \(a_r+S_{r-1}\) lies in a different coset of that line.
The two sets are therefore disjoint, so
\[
  d_r=|S_{r-1}|=1+T_{r-1}+Y_{r-1}.
\]
Subtracting \(r\) from \(d_r\) gives the formula for \(y_r\).

For \(i>r\), the prefix \(A_i\) is noncommensurable because rational rank
cannot decrease.  If in addition \(i\geq4\) and \(y_i=0\), then
Theorem~\ref{thm:cdk-equality} would make \(A_i\) a homogeneous
progression.  Thus \(y_i\geq1\) in the stated range.
\end{proof}

For a fixed integer \(C\), define
\begin{equation}
\label{eq:rank-window-constants}
\begin{split}
  r_C&=
  \max\left(
  \{3\}\cup
  \left\{
  r\in\Z_{\geq4}:
  \frac{r^2-5r+2}{2}\leq C
  \right\}
  \right),\\
  s_C&=\max\{4,r_C\},\\
  D_C^{\mathrm{rk}}&=\max\{0,C+s_C-1\},\\
  J_C^{\mathrm{rk}}&=\max\{I_1,s_C\},\\
  N_C^{\mathrm{rk}}&=J_C^{\mathrm{rk}}+D_C^{\mathrm{rk}}.
\end{split}
\end{equation}
The set in the definition of \(r_C\) is finite, so these constants are
effective.

\begin{theorem}
\label{thm:effective-commensurability}
Let \(C\) be an integer and \(n\geq N_C^{\mathrm{rk}}\).  If
\[
  A\subset\R_{>0},\qquad |A|=n,\qquad
  |\FS(A)|\leq T_n+n+C+1,
\]
then \(A\) is commensurable.
\end{theorem}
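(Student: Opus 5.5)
Suppose, for contradiction, that \(A\) is noncommensurable, and let \(r\) be the least index with \(A_r\) noncommensurable.  Since any two positive reals span a \(\Q\)-space of dimension at most two, while a single element is always commensurable, we have \(r\geq2\).  By \eqref{eq:global-budget} the hypothesis is \(Y_n\leq n+C\).  The plan is to show that \(r\) is bounded by \(s_C\), then find a late index with \(y_i=1\), and finally contradict Theorem~\ref{thm:bounded-excess}.

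\emph{Step 1: bounding \(r\).}  Suppose \(r\geq4\).  Lemma~\ref{lem:first-rank-jump} gives
\[
  y_r=\tfrac{(r-1)(r-2)}{2}+Y_{r-1}\geq\tfrac{(r-1)(r-2)}{2}.
\]
It also gives \(y_i\geq1\) for every \(i>r\) with \(i\geq4\), so these indices contribute at least \(n-r\).  Since all \(y_j\geq0\),
\[
  n+C\geq Y_n\geq\tfrac{(r-1)(r-2)}{2}+(n-r).
\]
This rearranges to \((r^2-5r+2)/2\leq C\), so \(r\leq r_C\).  In every case \(r\leq s_C\).  Consequently \(A_i\) is noncommensurable for all \(i\geq s_C\), and since \(s_C\geq4\), Lemma~\ref{lem:first-rank-jump} (or directly Theorem~\ref{thm:cdk-equality}) gives \(y_i\geq1\) for all \(i\geq s_C\).

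\emph{Step 2: locating a unit-excess step.}  Consider the window \(i\in[J_C^{\mathrm{rk}},n]\), which is nonempty because \(n\geq N_C^{\mathrm{rk}}\).  Each such \(i\) is at least \(s_C\), so \(y_i\geq1\).  Suppose no index in the window has \(y_i=1\); then \(y_i\geq2\) throughout it.  Counting \(y_i\geq1\) on \([s_C,J_C^{\mathrm{rk}}-1]\) and \(y_i\geq2\) on the window gives
\[
  Y_n\geq (J_C^{\mathrm{rk}}-s_C)+2(n-J_C^{\mathrm{rk}}+1)=2n-J_C^{\mathrm{rk}}-s_C+2.
\]
Compare this with \(Y_n\leq n+C\).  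If \(C+s_C-1\geq0\), then \(n\geq J_C^{\mathrm{rk}}+C+s_C-1\) makes the lower bound exceed \(n+C\), a contradiction.  The other case is analogous, using \(D_C^{\mathrm{rk}}=0\) together with an extra unit coming from nonnegativity; the exact off-by-one bookkeeping is the only place where care is needed.  In either case there is an index \(i\geq J_C^{\mathrm{rk}}\geq I_1\) with \(y_i=1\).

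\emph{Step 3: contradiction.}  Apply Theorem~\ref{thm:bounded-excess} with \(E=1\) and this index \(i\).  It gives \(A_i=t([i+1]\setminus D)\), so \(A_i\) is commensurable.  But \(i\geq s_C\geq r\), so \(A_i\) is noncommensurable, a contradiction.

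The main obstacle is the constant bookkeeping in Step 2, namely checking that \(N_C^{\mathrm{rk}}=J_C^{\mathrm{rk}}+D_C^{\mathrm{rk}}\) suffices, including the boundary case \(D_C^{\mathrm{rk}}=0\) when \(C\) is very negative.  The conceptual steps are direct applications of Lemma~\ref{lem:first-rank-jump} and Theorem~\ref{thm:bounded-excess}.
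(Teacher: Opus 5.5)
Your proof is correct and is essentially the paper's argument: you bound the first rank jump by \(r\leq r_C\) via Lemma~\ref{lem:first-rank-jump}, use \(y_i\geq1\) for \(i\geq s_C\) together with \(Y_n\leq n+C\) to force some \(y_i=1\) with \(i\geq J_C^{\mathrm{rk}}\geq I_1\), and reach a contradiction through Theorem~\ref{thm:bounded-excess} with \(E=1\). The boundary case you flag needs no separate treatment, since your Step~2 inequality requires only \(n\geq J_C^{\mathrm{rk}}+C+s_C-1\), and \(n\geq N_C^{\mathrm{rk}}=J_C^{\mathrm{rk}}+\max\{0,C+s_C-1\}\) always gives this (the paper instead notes that \(C+s_C-1<0\) already contradicts \(y_i\geq1\) on \([s_C,n]\)).
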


\begin{proof}
Suppose otherwise, and let \(r\) be the first rational-rank jump.  If
\(r\geq4\), Lemma~\ref{lem:first-rank-jump} and
\eqref{eq:global-budget} give
\[
\begin{split}
  n+C
  &\geq Y_n\\
  &\geq
  y_r+\sum_{i=r+1}^n y_i\\
  &\geq
  \frac{(r-1)(r-2)}2+n-r.
\end{split}
\]
Hence
\[
  \frac{r^2-5r+2}{2}\leq C,
\]
so \(r\leq r_C\).  This also holds when \(r\leq3\).

Every prefix of size at least \(s_C\) is noncommensurable, and
\(s_C\geq4\).  If \(r\geq s_C\), the formula in
Lemma~\ref{lem:first-rank-jump} gives \(y_r\geq1\); otherwise the second
part of that lemma applies.  Hence \(y_i\geq1\) for \(i\geq s_C\).  The
total excess above one on this tail satisfies
\begin{equation}
\label{eq:rank-surplus}
  \sum_{i=s_C}^n(y_i-1)
  =Y_n-Y_{s_C-1}-(n-s_C+1)
  \leq C+s_C-1
  \leq D_C^{\mathrm{rk}}.
\end{equation}
If \(C+s_C-1<0\), this contradicts the nonnegativity of the left-hand
side.  We may therefore assume that
\(D_C^{\mathrm{rk}}=C+s_C-1\).
Consider the \(D_C^{\mathrm{rk}}+1\) indices
\[
  J_C^{\mathrm{rk}},
  J_C^{\mathrm{rk}}+1,\ldots,
  J_C^{\mathrm{rk}}+D_C^{\mathrm{rk}}.
\]
All these indices are at most \(n\).  If every corresponding excess were at least
two, their contribution to the left-hand side of
\eqref{eq:rank-surplus} would exceed \(D_C^{\mathrm{rk}}\).  Thus
\(y_i=1\) for some \(i\geq I_1\).

Theorem~\ref{thm:bounded-excess}, with \(E=1\), now gives
\[
  A_i=t\bigl([i+1]\setminus\{m\}\bigr)
\]
for some \(t>0\) and \(m\in[i]\).  This prefix is commensurable, contrary
to the definition of \(r\).
\end{proof}

\begin{corollary}
\label{cor:primitive-normalisation}
Under the hypotheses of Theorem~\ref{thm:effective-commensurability},
there is a unique pair \((\lambda,B)\), where \(\lambda>0\) and
\(B\subset\Z_{>0}\) is primitive, such that \(A=\lambda B\).
\end{corollary}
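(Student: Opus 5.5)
The corollary follows directly from Theorem~\ref{thm:effective-commensurability} and the definition of primitive normalisation; the plan is to separate existence from uniqueness.

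\emph{Existence.} By Theorem~\ref{thm:effective-commensurability}, \(A\) is commensurable, so \(A\subseteq\mu\Z_{>0}\) for some \(\mu>0\). Write \(A=\mu B'\) with \(B'\subset\Z_{>0}\) finite, and let \(g=\gcd(B')\geq1\). Setting \(\lambda=\mu g\) and \(B=g^{-1}B'\) gives \(A=\lambda B\). Here \(B\subset\Z_{>0}\) and \(\gcd(B)=1\), so \(B\) is primitive.

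\emph{Uniqueness.} Suppose \(A=\lambda B=\lambda' B'\) with both \(B\) and \(B'\) primitive. Fix \(a_1\in A\); then \(\lambda/\lambda'=b'_1/b_1\in\Q_{>0}\). Write this ratio in lowest terms as \(p/q\). Then \(pB=qB'\) as sets of integers, with elements matched in increasing order.
\begin{itemize}
\item Since \(\gcd(p,q)=1\), \(q\) divides every element of \(B\). Primitivity of \(B\) then forces \(q=1\).
\item Symmetrically, \(p\) divides every element of \(B'\), so primitivity of \(B'\) forces \(p=1\).
\end{itemize}
Hence \(\lambda=\lambda'\), and therefore \(B=\lambda^{-1}A=B'\).

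There is no real obstacle. The only substantive input is commensurability, which Theorem~\ref{thm:effective-commensurability} supplies; the rest is the elementary gcd argument just given.
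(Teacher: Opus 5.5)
Your proposal is correct and is essentially the paper's argument. Existence comes from commensurability plus dividing by the gcd. Uniqueness comes from writing the ratio of the two scales as a reduced rational and using primitivity of each normalisation to force its numerator and denominator to equal one; the paper phrases this as showing that both \(c\) and \(c^{-1}\) are integers when \(B'=cB\).
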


\begin{proof}
Commensurability allows us to clear denominators and divide by a greatest
common divisor.  If
\[
  A=\lambda B=\mu B',
\]
where \(B,B'\) are primitive positive-integer sets, then
\(B'=cB\) for a positive rational number \(c\).  The denominator of \(c\)
must divide \(\gcd(B)=1\), so \(c\) is an integer.  Applying the same
argument to \(B=c^{-1}B'\) shows that \(c^{-1}\) is also an integer.
Therefore \(c=1\).
\end{proof}

For the exceptional case, we also need to control the excesses on a tail.
Let
\[
  B=\{b_1<\cdots<b_n\}\subset\Z_{>0}
\]
be primitive, and retain the notation \(S_i,d_i,y_i,Y_i\).  We call a
prefix \emph{interval-like} if
\[
  S_i=t[0,\ell]_{\Z}
\]
for some \(t>0\) and integer \(\ell\geq0\).

For fixed \(C\), put
\begin{equation}
\label{eq:tail-constants}
\begin{split}
  K_C&=
  \max\left(
  \{2\}\cup
  \left\{
  k\in\Z_{\geq3}:
  \frac{k(k-3)}2\leq C+1
  \right\}
  \right),\\
  D_C&=\max\{0,C+3,C+K_C\},\\
  R_C&=\max\{4,K_C+1\},\\
  E_C&=D_C+1.
\end{split}
\end{equation}

\begin{proposition}
\label{prop:anomaly-tail}
Suppose
\[
  Y_n\leq n+C
  \qquad\text{and}\qquad
  \sigmaf(B)>T_n+n+C.
\]
Then there is an index \(r\leq R_C\) such that
\begin{equation}
\label{eq:anomaly-tail}
  y_i\geq1\quad(i\geq r),
  \qquad
  \sum_{i=r}^n(y_i-1)\leq D_C.
\end{equation}
\end{proposition}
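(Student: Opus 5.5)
The plan is to take \(r\) just past the last zero-excess index, then bound that index by showing that an interval-like prefix must later "break", and that a break is quadratically expensive. Let \(k\) be the largest index with \(k\geq4\) and \(y_k=0\). If no such index exists, put \(r=4\). Otherwise put \(r=k+1\).

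In either case \(y_i\geq1\) for every \(i\geq r\), since \(y_i\geq0\) always and \(k\) was the last zero. The budget \eqref{eq:global-budget} together with \(Y_{r-1}\geq0\) then gives
\[
  \sum_{i=r}^n(y_i-1)=Y_n-Y_{r-1}-(n-r+1)\leq C+r-1 .
\]
For \(r=4\) this is \(C+3\leq D_C\). For \(r=k+1\) it is \(C+k\), which is at most \(C+K_C\leq D_C\) provided \(k\leq K_C\). Also \(r\leq\max\{4,K_C+1\}=R_C\). So everything reduces to proving \(k\leq K_C\) when a zero index \(k\geq4\) exists.

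Suppose \(y_k=0\) with \(k\geq4\). By \eqref{eq:zero-excess}, \(B_k=t[k]\), so \(S_k=t[0,T_k]_{\Z}\) is interval-like. I would then follow the interval-like property forward, assuming \(S_{j-1}=t[0,\ell]_{\Z}\), and split on the next element \(b_j\).

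If \(b_j\in t\Z\) and \(b_j\leq t(\ell+1)\), then \(S_j=t[0,\ell+b_j/t]_{\Z}\) stays interval-like.

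Otherwise the step \emph{breaks}, and I claim \((b_j+S_{j-1})\cap S_{j-1}=\varnothing\). This is clear if \(b_j\notin t\Z\), since then the translate lies in a different coset of \(t\Z\). If \(b_j>t(\ell+1)\), every element of the translate exceeds \(t\ell=\max S_{j-1}\). Hence \(d_j=|S_{j-1}|\).

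If no break ever occurs, then \(\FS(B)=t[0,\sigmaf(B)/t]_{\Z}\) with \(B\subset t\Z\). Primitivity forces \(t=1\), so \(|\FS(B)|=\sigmaf(B)+1>T_n+n+C+1\), contradicting \(Y_n\leq n+C\).

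So there is a first breaking index \(j>k\). There \(y_j=|S_{j-1}|-j\geq 1+T_{j-1}-j\), because \(Y_{j-1}\geq0\). Every other index \(i\) with \(k<i\leq n\) has \(y_i\geq1\). Therefore
\[
  n+C\geq Y_n\geq (n-k-1)+(1+T_{j-1}-j),
\]
that is, \(T_{j-1}-j\leq C+k\). The function \(T_{j-1}-j\) is increasing for \(j\geq2\), and \(j\geq k+1\), so \(T_k-k-1\leq C+k\). This rearranges to \(k(k-3)/2\leq C+1\), hence \(k\leq K_C\) by the definition in \eqref{eq:tail-constants}.

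The main point needing care is the case analysis at a breaking step, namely checking that both kinds of failure (a non-multiple of \(t\), or a gap beyond \(\ell+1\)) give a disjoint translate and hence the full \(d_j=|S_{j-1}|\). A second point is to run the counting with \(k\) chosen as the \emph{last} zero index, so that the linear contribution \(n-k-1\) from the later steps cancels the \(n\) in the budget. The rest is bookkeeping that matches the constants \(K_C\), \(D_C\) and \(R_C\).
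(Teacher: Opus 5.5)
Your proof is correct and uses the same mechanism as the paper's. An interval-like support has a translate that becomes disjoint at a breaking step, primitivity rules out an interval-like final support, and the quadratic excess at the break exceeds the linear budget. The only variation is the anchor: you take \(k\) to be the last zero-excess index and walk forward to the first break, using monotonicity of \(j(j-3)/2\), whereas the paper takes \(k\) to be the largest interval-like prefix, so the break happens at step \(k+1\).
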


\begin{proof}
The final support is not an arithmetic progression.  Indeed, if
\(S_n=t[0,\ell]_{\Z}\), then \(t\) is a positive integer dividing every
element of the primitive set \(B\), so \(t=1\).  It would follow that
\[
  |\FS(B)|=\sigmaf(B)+1>T_n+n+C+1,
\]
contrary to the support bound.

Suppose first that no interval-like prefix has size at least three.  Then
\eqref{eq:zero-excess} gives \(y_i\geq1\) for \(i\geq4\), and
\[
  \sum_{i=4}^n(y_i-1)
  =Y_n-y_3-(n-3)
  \leq C+3
  \leq D_C.
\]
We may take \(r=4\).

Now suppose that \(k\geq3\) is the largest size of an interval-like
prefix.  Write
\[
  S_k=t[0,\ell]_{\Z}.
\]
The sets \(S_k\) and \(b_{k+1}+S_k\) are disjoint.  Otherwise
\(b_{k+1}/t\) would be an integer not exceeding \(\ell+1\), and their
union would again be an interval of difference \(t\).  Consequently,
\[
  y_{k+1}=|S_k|-(k+1)\geq T_k-k=\frac{k(k-1)}2.
\]
Every later \(y_i\) is at least one by maximality of \(k\) and
\eqref{eq:zero-excess}.  Therefore
\[
  Y_n\geq n-1+\frac{k(k-3)}2.
\]
Together with \(Y_n\leq n+C\), this gives
\[
  \frac{k(k-3)}2\leq C+1,
\]
so \(k\leq K_C\).  Finally,
\[
  \sum_{i=k+1}^n(y_i-1)
  =Y_n-Y_k-(n-k)
  \leq C+k
  \leq D_C.
\]
Taking \(r=k+1\leq R_C\) completes the proof.
\end{proof}

\section{Endpoint entry and tail rigidity}
\label{sec:endpoint-tail}

Fix \(C\geq-1\), and let \(K_C,D_C,R_C,E_C\) be as in
\eqref{eq:tail-constants}.  Choose the least integer \(Q_C\) such that
\begin{equation}
\label{eq:scale-threshold}
  \frac{Q_C(Q_C-1)}2-1>D_C,
\end{equation}
and put
\begin{equation}
\label{eq:entry-constants}
  M_C=M(2,1,E_C),\qquad
  J_C=\max\{I_1,R_C,M_C,Q_C\}.
\end{equation}

\begin{lemma}
\label{lem:one-deletion-support}
Let \(i\geq4\), let \(m\in[i]\), and put
\[
  B=[i+1]\setminus\{m\},\qquad L=\sigmaf(B)=T_{i+1}-m.
\]
Then
\[
  \FS(B)=
  \begin{cases}
    \cS(L,\{1\}),&m=1,\\
    \cS(L,\{2\}),&m=2,\\
    [0,L]_{\Z},&m\geq3.
  \end{cases}
\]
\end{lemma}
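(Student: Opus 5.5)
The plan is a direct computation. We prove lower-endpoint containment by hand and get the upper end by the symmetry \(s\mapsto L-s\) of \(\FS(B)\). No heavy tool is needed, although Theorem~\ref{thm:lev} could also supply the central interval.

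First, the centre. The set \(B\) contains \([m+1,i+1]_{\Z}\) and \([1,m-1]_{\Z}\). Its subset sums include \(\FS([i+1])\) with the sums using \(m\) removed, so we argue by a ``greedy'' construction. For any target \(s\) with \(m+1\leq s\leq L-m-1\), we choose a subset of \(B\) with sum \(s\). Take the largest elements of \(B\) greedily while the running total stays \(\leq s\). The remainder is then smaller than the next element, and we complete it with a small element, or a pair of small elements, avoiding \(m\).

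A cleaner route is induction on \(i\). Set \(B_i=[i+1]\setminus\{m\}\). Passing to \(B_{i+1}=B_i\cup\{i+2\}\) is exactly the self-loop situation. So I would verify the base case \(i=\max(4,m)\) directly and then apply Lemma~\ref{lem:endpoint-loop} with \(x=i+2\) and \(h\in\{0,1,2\}\). The condition \(h<x<L-2h\) clearly holds, since \(L\geq T_{i+1}-i\) is quadratic. This propagates \(\FS(B_i)=\cS(L_i,H)\) with the same \(H\).

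Second, the base cases and the holes. For \(m=1\), the value \(1\) is not a subset sum, since all elements are \(\geq2\). By symmetry \(L-1\) is also missing. For \(m=2\), the value \(2\) is not a sum: the only element below \(3\) is \(1\), and \(1\) cannot be used twice. Hence \(2\) and \(L-2\) are missing, while \(1\) and \(3\) are present. For \(m\geq3\), both \(1\) and \(2\) lie in \(B\), and no hole is expected.

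For the base case with \(m\leq2\), take \(i=4\). We check by hand \(\FS(\{2,3,4,5\})=[0,14]\setminus\{1,13\}\) and \(\FS(\{1,3,4,5\})=[0,13]\setminus\{2,11\}\).

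For \(m\geq3\), start the induction at \(i=m\), where \(B_m=[m+1]\setminus\{m\}=[m-1]\cup\{m+1\}\). Here \(\FS([m-1])=[0,T_{m-1}]\), which is an interval with \(T_{m-1}\geq m+1-1\) for \(m\geq3\). Adjoining \(m+1\) therefore gives the interval \([0,T_{m-1}+m+1]\). Subsequent steps add \(x=i+2\leq L_i+1\), which preserves the interval (this is the case \(H=\varnothing\) of the self-loop).

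The main obstacle is making sure the self-loop hypotheses, in particular \(x>\max B\), hold at each step, and that the base case for \(m\geq3\) with \(m\) small (\(m=3\), \(i\geq4\)) is started correctly. For \(m=3,i=4\), the set is \(\{1,2,4,5\}\), whose subset sums are all of \([0,12]\), and this can be checked directly. For \(m=4\), we start at \(i=4\) with \(\{1,2,3,5\}\), whose subset sums are \([0,11]\).
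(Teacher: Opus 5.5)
Your proof is correct and is essentially the paper's argument. Both adjoin the next consecutive integer one at a time and check that the endpoint pattern is preserved: no hole when \(m\geq3\), or the single hole \(m\in\{1,2\}\) together with \(L-m\). The paper runs the elementary complete-sequence step from the two-element bases \(\{1,2\}\), \(\{2,3\}\), \(\{1,3\}\); you instead verify four-element bases by hand and obtain each step from Lemma~\ref{lem:endpoint-loop} with \(h\in\{0,1,2\}\), whose hypotheses you check correctly. Your opening ``greedy'' paragraph is superseded by the induction and can simply be dropped.
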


\begin{proof}
If \(m\geq3\), the ordered set \(B\) begins with \(1,2\), and every
later element is at most one plus the sum of its predecessors.  The usual
complete-sequence induction therefore gives \(\FS(B)=[0,L]_{\Z}\).

For \(m=1\), the set is \(\{2,3,\ldots,i+1\}\).  Starting with
\(\FS(\{2,3\})=\{0,2,3,5\}\), the same induction shows that every
integer from \(2\) to \(L-2\) is represented.  The sums \(1\) and
\(L-1\) are not represented, and symmetry supplies the stated support.
For \(m=2\), start instead with
\(\FS(\{1,3\})=\{0,1,3,4\}\).  Induction gives every integer from
\(3\) to \(L-3\); the only missing values are \(2\) and \(L-2\).
\end{proof}

\begin{proposition}
\label{prop:endpoint-entry}
Let \(B=\{b_1<\cdots<b_n\}\subset\Z_{>0}\) be primitive and suppose
\[
  |\FS(B)|\leq T_n+n+C+1,\qquad
  \sigmaf(B)>T_n+n+C,
\]
and
\[
  n\geq J_C+D_C.
\]
Then there are \(i_0\in[J_C,J_C+D_C]_{\Z}\) and \(m\in\{1,2\}\) such that
\[
  y_{i_0}=1,\qquad
  B_{i_0}=[i_0+1]\setminus\{m\}.
\]
For every \(i_0\leq i\leq n\), writing \(L_i=\sigmaf(B_i)\), one has
\begin{equation}
\label{eq:persistent-one-hole}
  \FS(B_i)=\cS(L_i,\{m\}).
\end{equation}
\end{proposition}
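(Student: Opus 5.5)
We combine Proposition~\ref{prop:anomaly-tail}, Theorem~\ref{thm:bounded-excess} with \(E=1\), Lemma~\ref{lem:one-deletion-support}, and the endpoint bridge (Theorem~\ref{thm:endpoint-bridge}). First, \eqref{eq:global-budget} turns the support bound into \(Y_n\leq n+C\). Since \(\sigmaf(B)>T_n+n+C\), Proposition~\ref{prop:anomaly-tail} supplies an index \(r\leq R_C\leq J_C\) such that \(y_i\geq1\) for \(i\geq r\) and \(\sum_{i\ge r}(y_i-1)\leq D_C\). Now look at the \(D_C+1\) indices \(J_C,\ldots,J_C+D_C\); all of them are at most \(n\). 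If each had \(y_i\geq2\), the surplus would exceed \(D_C\). So some \(i_0\) in this window has \(y_{i_0}=1\).

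Because \(i_0\geq J_C\geq I_1\), Theorem~\ref{thm:bounded-excess} gives \(B_{i_0}=t([i_0+1]\setminus\{m\})\) for some \(t>0\) and \(m\in[i_0]\). We need \(t=1\) and \(m\in\{1,2\}\).

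\textbf{Scale.} The coefficient set \([i_0+1]\setminus\{m\}\) is primitive and \(B\) is an integer set, so \(t\) is a positive rational. The step I expect to be most delicate is ruling out \(t\neq1\). I would argue via the tail. By Corollary~\ref{cor:bounded-tail-rigidity} with \(E=E_C\), every later prefix is \(t([h_j]\setminus D_j)\) with the same \(t\), since each \(y_j\le 1+D_C=E_C\) and \(i_0\geq I_{E_C}\). I would add \(I_{E_C}\) to \(J_C\) if it is not already dominated. Then \(B=t B'\) with \(B'\) an integer set, and primitivity of \(B\) forces \(1/t\) to be an integer. If \(1/t=q\ge2\), then \(\gcd(B')\) is a multiple of \(q\); but \(B'\) contains consecutive integers (e.g.\ \(1,2\) or \(2,3\)), so \(\gcd(B')=1\), a contradiction. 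Hence \(t=1\). The constant \(Q_C\) presumably handles an alternative route: a non-unit scale would make a prefix interval-like with total about \(Q_C(Q_C-1)/2\), contradicting the surplus bound \(D_C\). I would fall back on that argument if the gcd argument needs care with \(t\) rational.

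\textbf{Excluding \(m\geq3\) and propagating the state.} Set \(H=\{m\}\) when \(m\in\{1,2\}\) and \(H=\varnothing\) when \(m\geq3\). By Lemma~\ref{lem:one-deletion-support}, \(\FS(B_{i_0})=\cS(L_{i_0},H)\). Since \(h\leq2\), \(q\leq1\) and \(i\geq J_C\geq M_C=M(2,1,E_C)\geq M(h,q,E_C)\), Theorem~\ref{thm:endpoint-bridge} applies at each step \(i\to i+1\) for \(i_0\le i<n\), because \(0\le y_{i+1}-0\le E_C\). (Here \(e_x=d_{i+1}-(i+1)=y_{i+1}\le E_C\), and \(2h<L_i\) holds trivially.) By induction this gives \(\FS(B_i)=\cS(L_i,H)\) for all \(i\ge i_0\), which is \eqref{eq:persistent-one-hole}. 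If \(m\geq3\), then \(\FS(B)=[0,\sigmaf(B)]_{\Z}\), so \(|\FS(B)|=\sigmaf(B)+1>T_n+n+C+1\), which contradicts the hypothesis. Therefore \(m\in\{1,2\}\), completing the proof.
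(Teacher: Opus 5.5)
Your overall route matches the paper's: the window argument for \(i_0\), Theorem~\ref{thm:bounded-excess} at \(E=1\), Lemma~\ref{lem:one-deletion-support}, successive application of the bridge, and exclusion of \(H=\varnothing\) by the support bound. Those steps are fine. The gap is in the scale step.

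Corollary~\ref{cor:bounded-tail-rigidity} with \(E=E_C\) requires \(i_0\geq I_{E_C}\). The proposition only provides \(i_0\geq J_C=\max\{I_1,R_C,M_C,Q_C\}\), and \(I_{E_C}\) is never dominated by this maximum:
\begin{itemize}
\item \(E_C=D_C+1\geq3\) already gives \(I_{E_C}\geq I_3=68>66=I_1\);
\item \(I_{E_C}\geq 2E_C+5\) exceeds \(R_C\), \(M_C\) and \(Q_C\).
\end{itemize}
For \(C=-1\), for example, \(J_{-1}=66\) and the window is \([66,68]\), so \(i_0\in\{66,67\}\) is not covered. Enlarging \(J_C\) proves a different proposition. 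It shifts the window, strengthens the hypothesis on \(n\), and changes \(N_C\) downstream, including the stated values \(N_{-1}=68\) and \(N_0=69\). Your fallback also misidentifies the mechanism: no interval-like prefix appears in the argument that works.

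What you are missing is a one-step residue-class argument, and this is exactly what \(Q_C\) is for.

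First, \(t\) is a positive integer, not merely rational. The coefficient set \([i_0+1]\setminus\{m\}\) has gcd \(1\), and \(tc\in\Z\) for each coefficient \(c\), so \(t\in\Z\) by B\'ezout.

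Now suppose \(t\geq2\). Primitivity of \(B\) yields a least \(j>i_0\) with \(t\nmid b_j\). Then \(S_{j-1}\subseteq t\Z\), while \(b_j+S_{j-1}\) lies in a different residue class modulo \(t\). The two sets are therefore disjoint, and \(d_j=|S_{j-1}|=1+T_{j-1}+Y_{j-1}\). Consequently
\[
  y_j-1=\frac{(j-1)(j-2)}2+Y_{j-1}-1
  \geq\frac{i_0(i_0-1)}2-1
  \geq\frac{Q_C(Q_C-1)}2-1>D_C,
\]
which contradicts \(\sum_{i\geq r}(y_i-1)\leq D_C\) from Proposition~\ref{prop:anomaly-tail}.

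This argument needs no bounded-excess input beyond \(E=1\) and keeps the stated constants. Once \(t=1\) is established, the rest of your proof goes through unchanged.
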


\begin{proof}
Proposition~\ref{prop:anomaly-tail} supplies \(r\leq R_C\) such that
\eqref{eq:anomaly-tail} holds.  Among the \(D_C+1\) indices
\[
  J_C,J_C+1,\ldots,J_C+D_C
\]
there is an \(i_0\) with \(y_{i_0}=1\).  Otherwise these indices alone
would contribute more than \(D_C\) to the sum in
\eqref{eq:anomaly-tail}.  Since
\(i_0\geq I_1\), Theorem~\ref{thm:bounded-excess} gives
\[
  B_{i_0}=t\bigl([i_0+1]\setminus\{m\}\bigr)
\]
for some \(t>0\) and \(m\in[i_0]\).

The coefficient set \([i_0+1]\setminus\{m\}\) has greatest common
divisor one.  Since its dilation is an integer set, \(t\) is a positive
integer.  Suppose that \(t>1\).
Primitivity of \(B\) gives a first \(j>i_0\) for which \(t\nmid b_j\).
Then
\[
  S_{j-1}\subseteq t\Z,\qquad
  b_j+S_{j-1}\subseteq b_j+t\Z
\]
lie in distinct residue classes modulo \(t\).  Hence
\[
  d_j=|S_{j-1}|=1+T_{j-1}+Y_{j-1}.
\]
It follows that
\[
\begin{split}
  y_j-1
  &=\frac{(j-1)(j-2)}2+Y_{j-1}-1\\
  &\geq\frac{i_0(i_0-1)}2-1\\
  &\geq\frac{J_C(J_C-1)}2-1>D_C,
\end{split}
\]
contrary to \eqref{eq:anomaly-tail}.  Thus \(t=1\).

Lemma~\ref{lem:one-deletion-support} gives an initial state
\[
  H\in\{\varnothing,\{1\},\{2\}\}.
\]
For every \(j>i_0\), Proposition~\ref{prop:anomaly-tail} gives
\[
  1\leq y_j\leq D_C+1=E_C.
\]
The three possible states have
\[
  (h,q)=(0,0),(1,1),(2,1),
\]
and each satisfies \(M(h,q,E_C)\leq M_C\).  At \(i_0\), the required
separation \(2h<L_{i_0}\) holds.  Indeed,
Lemma~\ref{lem:one-deletion-support} gives
\[
  L_{i_0}=T_{i_0+1}-m,
\]
so \(2h\leq4<L_{i_0}\).  Theorem~\ref{thm:endpoint-bridge}
therefore applies successively and preserves \(H\) to the final prefix.

If \(H=\varnothing\), then
\[
  \FS(B)=[0,\sigmaf(B)]_{\Z}.
\]
The support bound would imply
\[
  \sigmaf(B)\leq T_n+n+C,
\]
contrary to the exceptional total condition.  Hence
\(H=\{m\}\) for \(m=1\) or \(m=2\), proving
\eqref{eq:persistent-one-hole}.
\end{proof}

\begin{corollary}
\label{cor:tail-rigidity}
Under the hypotheses of Proposition~\ref{prop:endpoint-entry}, for every
\(j>i_0\),
\begin{equation}
\label{eq:tail-element}
  b_j=j+y_j.
\end{equation}
Moreover,
\begin{equation}
\label{eq:monotone-excess}
  1=y_{i_0}\leq y_{i_0+1}\leq\cdots\leq y_n.
\end{equation}
\end{corollary}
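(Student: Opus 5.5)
We argue by induction along the tail \(i_0<j\leq n\), and combine Theorem~\ref{thm:endpoint-bridge} with the persistent endpoint state \eqref{eq:persistent-one-hole}.

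Fix \(j>i_0\). By Proposition~\ref{prop:endpoint-entry}, the prefix \(B_{j-1}\) satisfies \(\FS(B_{j-1})=\cS(L_{j-1},\{m\})\) with \(m\in\{1,2\}\). Thus the state is \((h,q)=(m,1)\), and \(M(m,1,E_C)\leq M(2,1,E_C)=M_C\leq J_C\leq j-1\). The separation condition \(2h\leq4<L_{j-1}\) holds because \(L_{j-1}\geq T_{j-1}\) is large.

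Now apply Theorem~\ref{thm:endpoint-bridge} with \(m\) replaced by \(j-1\) and \(x=b_j>b_{j-1}\). The quantity in that theorem is
\[
  e_x=|(b_j+S_{j-1})\setminus S_{j-1}|-j=d_j-j=y_j .
\]
By Proposition~\ref{prop:anomaly-tail}, and because \(j>i_0\geq J_C\geq R_C\geq r\), we have \(1\leq y_j\leq D_C+1=E_C\). Hence \(e_x\in\{0,\ldots,E_C\}\), and the theorem yields \(b_j=(j-1)+1+y_j=j+y_j\). This is \eqref{eq:tail-element}.

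For \eqref{eq:monotone-excess}, note first that \(y_{i_0}=1\) by Proposition~\ref{prop:endpoint-entry}. Next, \(b_{i_0}=i_0+1\) because \(B_{i_0}=[i_0+1]\setminus\{m\}\) with \(m\leq2<i_0+1\); this equals \(i_0+y_{i_0}\), so \eqref{eq:tail-element} also holds at \(j=i_0\). Since the \(b_j\) are strictly increasing integers, for \(j>i_0\) we get
\[
  j+y_j=b_j\geq b_{j-1}+1=j+y_{j-1},
\]
so \(y_j\geq y_{j-1}\).

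The only point needing care is checking that the bridge's hypothesis \(j-1\geq M(h,q,E_C)\) holds at every step, including the first step \(j=i_0+1\), and that the excess bound from Proposition~\ref{prop:anomaly-tail} covers all \(j>i_0\). Both follow from \(J_C\geq\max\{M_C,R_C\}\).
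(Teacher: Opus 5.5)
Your proposal is correct and follows the paper's proof. You apply Theorem~\ref{thm:endpoint-bridge} to each transition from $B_{j-1}$ to $B_j$ with $e_x=d_j-j=y_j$ to get $b_j=j+y_j$, then use $b_{i_0}=i_0+1=i_0+y_{i_0}$ and the strict increase of the $b_j$ to get \eqref{eq:monotone-excess}; your explicit checks of the bridge hypotheses (the state $(h,q)=(m,1)$ from \eqref{eq:persistent-one-hole}, $j-1\geq J_C\geq M_C$, and $1\leq y_j\leq E_C$ from Proposition~\ref{prop:anomaly-tail} via $J_C\geq R_C$) spell out what the paper's proof inherits from the proof of Proposition~\ref{prop:endpoint-entry}.
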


\begin{proof}
For the transition from \(B_{j-1}\) to \(B_j\), the quantity \(e_x\) in
Theorem~\ref{thm:endpoint-bridge} is
\[
  d_j-j=y_j.
\]
The theorem gives \eqref{eq:tail-element}.  At \(i_0\),
\[
  b_{i_0}=i_0+1=i_0+y_{i_0}.
\]
For \(i_0\leq j<n\), the inequality \(b_{j+1}>b_j\) now gives
\[
  j+1+y_{j+1}>j+y_j.
\]
The excesses are integers, so \(y_{j+1}\geq y_j\).
\end{proof}

\section{The partition classification}
\label{sec:classification}

We now prove Theorem~\ref{thm:main-classification}.  For \(C\geq-1\), set
\begin{equation}
\label{eq:global-threshold}
  N_C=
  \max\{1-C,N_C^{\mathrm{rk}},J_C+D_C\}.
\end{equation}
This threshold is effectively computable from the preceding constants.
For \(C=-1\) and \(C=0\), it gives
\[
  N_{-1}=68,\qquad N_0=69.
\]
\begin{proof}[Proof of Theorem~\ref{thm:main-classification}]
Let \(n\geq N_C\) and suppose first that
\[
  A\subset\R_{>0},\qquad |A|=n,\qquad
  |\FS(A)|\leq T_n+n+C+1.
\]
By Theorem~\ref{thm:effective-commensurability}, there is a unique
primitive normalisation
\[
  A=\lambda B,\qquad
  B=\{b_1<\cdots<b_n\}\subset\Z_{>0}.
\]
If
\[
  \sigmaf(B)\leq T_n+n+C,
\]
then the first alternative holds.  We may therefore assume that
\begin{equation}
\label{eq:exceptional-total}
  \sigmaf(B)>T_n+n+C.
\end{equation}

Proposition~\ref{prop:endpoint-entry} supplies \(i_0\) and
\(m\in\{1,2\}\) such that
\[
  B_{i_0}=[i_0+1]\setminus\{m\},
\]
the endpoint state remains \(\{m\}\) through the final prefix, and the
excesses are nondecreasing from \(i_0\).  Let
\[
  k=\max\{i\in[i_0,n]_{\Z}:y_i=1\}
\]
and put \(\ell=n-k\).  If \(\ell=0\), let \(\pi=()\).  Otherwise define
\[
  \pi_r=y_{k+r}-1,\qquad 1\leq r\leq\ell.
\]
By \eqref{eq:monotone-excess},
\[
  1\leq\pi_1\leq\cdots\leq\pi_\ell,
\]
so \(\pi\) is a partition.

For \(i_0<j\leq k\), equations \eqref{eq:tail-element} and \(y_j=1\)
give \(b_j=j+1\).  Hence
\[
  B_k=[k+1]\setminus\{m\}.
\]
For \(1\leq r\leq\ell\), the same identity gives
\[
  b_{k+r}=k+r+1+\pi_r.
\]
Consequently,
\begin{equation}
\label{eq:template-recovered}
  B=\cE_n(m,\pi).
\end{equation}

Let \(w=|\pi|\) and \(L=\sigmaf(B)\).  Direct summation in
\eqref{eq:template-recovered} gives
\begin{equation}
\label{eq:template-total}
  L=T_{n+1}-m+w.
\end{equation}
The final endpoint state is \(\{m\}\), and hence
\begin{equation}
\label{eq:template-cardinality}
  |\FS(B)|=L-1.
\end{equation}
Since \(T_{n+1}=T_n+n+1\), the exceptional total condition
\eqref{eq:exceptional-total} gives
\[
  w>C+m-1,
\]
whereas the support bound and \eqref{eq:template-cardinality} give
\[
  w\leq C+m+1.
\]
As \(w\) is an integer,
\[
  w\in\{C+m,C+m+1\}.
\]
This proves the necessity of the second alternative.

We next prove sufficiency.  The ordinary branch follows from
\eqref{eq:intro-total-bound}.  Fix
\[
  m\in\{1,2\},\qquad
  \pi\in\PiSet(C+m)\cup\PiSet(C+m+1),
\]
and let \(w=|\pi|\), \(\ell=\ell(\pi)\), and \(k=n-\ell\).
Because \(\pi\) is one of the permitted partitions,
\[
  \ell\leq w\leq C+3\leq D_C.
\]
Therefore
\[
  k=n-\ell\geq J_C.
\]

The initial set
\[
  B_k=[k+1]\setminus\{m\}
\]
is primitive and, by Lemma~\ref{lem:one-deletion-support}, satisfies
\[
  \FS(B_k)=\cS(\sigmaf(B_k),\{m\}).
\]
Moreover,
\[
  2m<T_{k+1}-m=\sigmaf(B_k),
\]
because \(k\geq J_C\geq66\).  Each appended element is positive, so
\(2m<\sigmaf(B_j)\) for every subsequent prefix \(B_j\).
For \(1\leq r\leq\ell\), append
\[
  x_r=k+r+1+\pi_r.
\]
Relative to a \((k+r-1)\)-element prefix, this extension has excess
\[
  e_r=1+\pi_r\leq1+w\leq D_C+1=E_C.
\]
The sequence \((x_r)\) is strictly increasing because
\[
  x_1>k+1,\qquad
  x_{r+1}-x_r=1+\pi_{r+1}-\pi_r\geq1.
\]
Since \(k\geq J_C\geq M_C\), the converse part of
Theorem~\ref{thm:endpoint-bridge} applies at each step and preserves the
state \(\{m\}\).  We obtain
\[
  \FS(\cE_n(m,\pi))
  =\cS(T_{n+1}-m+w,\{m\}),
\]
which is \eqref{eq:intro-template-support}.

If \(w=C+m\), equations \eqref{eq:template-total} and
\eqref{eq:template-cardinality} give
\[
  L=T_n+n+C+1,\qquad
  |\FS(B)|=T_n+n+C.
\]
If \(w=C+m+1\), they give
\[
  L=T_n+n+C+2,\qquad
  |\FS(B)|=T_n+n+C+1.
\]
Thus each exceptional template satisfies the target support bound and the
exceptional total condition.

It remains to prove uniqueness.  The primitive normalisation
\((\lambda,B)\) is unique by Corollary~\ref{cor:primitive-normalisation}.
The final lower endpoint hole determines \(m\).  Equivalently,
\(m=1\) precisely when \(1\notin B\), while \(m=2\) precisely when
\(1\in B\) and \(2\notin B\).  In the primitive template, the integer
\[
  k=\max\{j:b_j\leq j+1\}
\]
is the end of the initial block.  The remaining parts are recovered by
\[
  \pi_r=b_{k+r}-(k+r+1).
\]
Hence \(m\) and \(\pi\) are unique.  Finally, the two alternatives are
disjoint by their opposite inequalities for \(\sigmaf(B)\).
\end{proof}

\begin{corollary}
\label{cor:all-fixed-defects}
Theorem~\ref{thm:main-classification}, together with the earlier ranges
in \cite[Theorem~1.1]{CarpenterDefantKravitz2026} and
\cite[Theorems~4.1 and~5.9]{Chen2026Boundaries}, gives an effective
eventual classification for every fixed integer \(C\).
\end{corollary}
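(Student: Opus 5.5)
The corollary asserts that, for every fixed integer \(C\), there are an effectively computable threshold and an explicit description of all \(n\)-element sets \(A\subset\R_{>0}\) with \(|\FS(A)|\leq T_n+n+C+1\) once \(n\) exceeds that threshold. The plan is a case split on \(C\) into three ranges, assigning each range to an already established result and checking that the ranges cover \(\Z\) and that each threshold is effective.

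\emph{Range \(C\leq-4\).} Put \(M=n+C\). For \(n\geq4\), the condition \(0\leq M\leq n-4\) becomes \(n\geq-C\) together with \(C\leq-4\). Hence \cite[Theorem~1.1]{CarpenterDefantKravitz2026} applies for every \(n\geq\max\{4,-C\}\), which is an explicit threshold. It gives the classification \(A=\lambda B\) with \(\sigmaf(B)\leq T_n+n+C\). For \(n<-C\) we have \(T_n+n+C+1<T_n+1\), so \eqref{eq:intro-minimum} shows that no set qualifies. We do not need these \(n\), since only an eventual statement is claimed.

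\emph{Range \(C\in\{-3,-2\}\).} Here \cite[Theorems~4.1 and~5.9]{Chen2026Boundaries} give the classification for every \(n\geq5\), including equality cases, so the threshold \(5\) is effective.

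\emph{Range \(C\geq-1\).} Theorem~\ref{thm:main-classification} applies for \(n\geq N_C\). The constant \(N_C\) from \eqref{eq:global-threshold} is computed from \(N_C^{\mathrm{rk}}\), \(J_C\) and \(D_C\). These are built from \(I_E\), \(r_C\), \(K_C\), \(Q_C\) and \(M(h,q,E)\), each a maximum over an explicitly bounded finite set or an explicit closed formula. Hence \(N_C\) is effective.

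Every integer \(C\) lies in exactly one of the three ranges, so combining them gives the corollary. The only step needing care is translating between the parameterisations. The condition \(0\leq M\leq n-4\) must be checked to hold for all large \(n\), which it does exactly when \(C\leq-4\) and \(n\geq-C\). The cited results must also use the same normalisation convention for \(|\FS(A)|\) as here, namely inclusion of the empty sum; in \cite{CarpenterDefantKravitz2026} this matches the stated bound \(T_n+1+M\). I expect no genuine obstacle beyond this bookkeeping.
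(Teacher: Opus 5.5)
Your proposal is correct and matches the paper's own reasoning: the paper gives no separate proof of this corollary, and the introduction justifies it by the same split, with the Carpenter--Defant--Kravitz theorem covering \(C\leq-4\) via \(M=n+C\), Chen's theorems covering \(C\in\{-3,-2\}\) for \(n\geq5\), and Theorem~\ref{thm:main-classification} covering \(C\geq-1\) with the effective threshold \(N_C\). Your explicit check that \(0\leq n+C\leq n-4\) holds exactly when \(C\leq-4\) and \(n\geq-C\), and your remark that no set qualifies when \(n<-C\), supply the bookkeeping that the paper leaves implicit.
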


\section{Enumeration and finite-state encoding}
\label{sec:consequences}

\begin{proof}[Proof of Corollary~\ref{cor:template-count}]
For \(m=1\), the two allowed partition weights are \(C+1\) and
\(C+2\).  For \(m=2\), they are \(C+2\) and \(C+3\).  The
parametrisation in Theorem~\ref{thm:main-classification} is unique, so the
number of exceptional primitive dilation classes is
\[
  P(C+1)+2P(C+2)+P(C+3).
\]
\end{proof}

\begin{example}
\label{ex:C-minus-one}
For \(C=-1\), the five parameter pairs are
\[
  (1,()),\quad (1,(1)),\quad
  (2,(1)),\quad (2,(1,1)),\quad (2,(2)).
\]
They give the primitive sets
\[
\begin{gathered}
  \{2,3,\ldots,n+1\},\\
  \{2,3,\ldots,n,n+2\},\\
  \{1,3,4,\ldots,n,n+2\},\\
  \{1,3,4,\ldots,n-1,n+1,n+2\},\\
  \{1,3,4,\ldots,n,n+3\}.
\end{gathered}
\]
The first and third have support cardinality \(T_n+n-1\); the other
three have cardinality \(T_n+n\).
\end{example}

\begin{example}
\label{ex:C-zero}
For \(C=0\), the \(m=1\) branch is indexed by the three partitions
\[
  (1),\qquad (2),\qquad (1,1),
\]
and the \(m=2\) branch is indexed by the five partitions whose weights are
\(2\) or \(3\).  Thus there are eight exceptional dilation classes.  This example
also illustrates why the endpoint marker \(m\) must be retained in a
finite encoding: the same partition weight may occur in both branches.
\end{example}

We encode the partition classification by a finite automaton and then
identify its accepted words with the exceptional sets.

For fixed \(C\geq-1\), put
\[
  U_C=C+3,\qquad
  W_{m,C}=\{C+m,C+m+1\}\cap\Z_{\geq0}.
\]
Let the alphabet be
\[
  \mathcal A_C=
  \{\mu_1,\mu_2\}\cup[1,U_C]_{\Z},
\]
where \(\mu_m\) is an initial marker recording the missing endpoint.
Define the finite state set
\[
  \mathcal Q_C=
  \{q_{\mathrm{start}},q_{\mathrm{dead}}\}
  \cup
  \{(m,r,s):
  m\in\{1,2\},\ 0\leq r,s\leq U_C\}.
\]
Let \(\delta\) denote the transition function.  From
\(q_{\mathrm{start}}\), the letter \(\mu_m\) leads to
\((m,0,0)\).  From \((m,r,s)\), an integer letter \(a\) leads to
\((m,a,s+a)\) if
\[
  a\geq\max\{1,r\},\qquad s+a\leq U_C;
\]
every other transition leads to \(q_{\mathrm{dead}}\).  Finally,
\[
  \delta(q_{\mathrm{dead}},a)=q_{\mathrm{dead}}
  \qquad(a\in\mathcal A_C).
\]
A state \((m,r,s)\) is accepting precisely when
\[
  s\in W_{m,C}.
\]
Thus the word \(\mu_m\) alone encodes the empty partition whenever
\(0\in W_{m,C}\).

\begin{theorem}
\label{thm:finite-control}
Fix \(C\geq-1\) and \(n\geq N_C\).  The map
\[
  \mu_m\pi_1\cdots\pi_\ell
  \longmapsto
  \cE_n(m,\pi)
\]
is a bijection from the words accepted by this automaton to the
exceptional primitive sets in Theorem~\ref{thm:main-classification}.
\end{theorem}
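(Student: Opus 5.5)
The plan is to reduce the statement to the uniqueness of the parametrisation in Theorem~\ref{thm:main-classification}, by first characterising the language accepted by the automaton exactly. I will show that a word is accepted if and only if it has the form \(\mu_m\pi_1\cdots\pi_\ell\), where \(m\in\{1,2\}\) and \((\pi_1,\ldots,\pi_\ell)\in\PiSet(C+m)\cup\PiSet(C+m+1)\).

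For the forward direction, first note that an accepting state is never \(q_{\mathrm{start}}\) or \(q_{\mathrm{dead}}\). So an accepted word must begin with a marker \(\mu_m\). Moreover, no later marker or rejected integer letter can occur, since \(q_{\mathrm{dead}}\) is absorbing and \(\mu\)-letters from a state \((m,r,s)\) lead to \(q_{\mathrm{dead}}\).

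Next, I will show by induction along the word that after reading \(\mu_m a_1\cdots a_j\) without dying, the state is \((m,a_j,a_1+\cdots+a_j)\), with \(a_0=0\). The transition rule \(a\geq\max\{1,r\}\) then forces \(1\leq a_1\leq\cdots\leq a_\ell\). The rule \(s+a\leq U_C\) guarantees the state stays in \(\mathcal Q_C\), because \(r\leq s\leq U_C\). Acceptance means \(\sum a_j\in W_{m,C}\), so \((a_j)\) is a partition of weight \(C+m\) or \(C+m+1\).

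For the converse, take any such partition. Its weight is at most \(C+3=U_C\), so every partial sum is at most \(U_C\). Every part is at least \(1\) and at least the previous part, and every letter lies in \([1,U_C]\). Hence no dead transition occurs, and the final state is accepting. The one check needed here is that \(W_{m,C}\) exactly matches the allowed weights, including the empty partition. Since \(C\geq-1\), both \(C+m\) and \(C+m+1\) are nonnegative. Intersecting with \(\Z_{\geq0}\) therefore removes nothing, and \(\PiSet(w)=\varnothing\) for \(w<0\) never arises.

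With this language description, the map is well defined and surjective onto the exceptional primitive sets by the second alternative of Theorem~\ref{thm:main-classification} for \(n\geq N_C\). The exceptional sets in that theorem are exactly the \(\cE_n(m,\pi)\) with these parameters. Each \(\cE_n(m,\pi)\) is primitive, since it contains two consecutive integers, for instance \(3,4\). It is also an \(n\)-element set: its parts lie strictly above the initial interval and strictly increase, because \(\ell\leq w\leq C+3\) is small relative to \(n\geq N_C\).

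Injectivity follows from the uniqueness clause of Theorem~\ref{thm:main-classification}. Concretely, \(m\) is recovered as the least positive integer missing from \(\cE_n(m,\pi)\), and \(\pi\) is recovered from \(k=\max\{j:b_j\leq j+1\}\) via \(\pi_r=b_{k+r}-(k+r+1)\). Distinct words give distinct pairs \((m,\pi)\), because a word is determined by its marker and its integer letters.

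I expect no real obstacle. The only delicate point is the bookkeeping in the forward induction, namely that the middle coordinate records the last part and enforces weak monotonicity, together with the role of the initial state \((m,0,0)\) in allowing any first part \(\geq1\).
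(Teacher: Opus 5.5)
Your proposal is correct and follows the same route as the paper. You identify the accepted words with the pairs \((m,\pi)\), where \(m\in\{1,2\}\) and \(\pi\in\PiSet(C+m)\cup\PiSet(C+m+1)\), and then invoke the sufficiency and uniqueness parts of Theorem~\ref{thm:main-classification}. You are more explicit than the paper in two places: you check that every permitted pair \((m,\pi)\) yields an accepted word, which surjectivity needs, and you spell out how \(m\) and \(\pi\) are recovered from \(\cE_n(m,\pi)\).
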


\begin{proof}
An accepted word begins with a unique marker \(\mu_m\).  Its remaining
letters form a weakly increasing sequence of positive integers with sum
in
\[
  \{C+m,C+m+1\}.
\]
They therefore form a unique partition
\[
  \pi\in\PiSet(C+m)\cup\PiSet(C+m+1).
\]
The converse direction of Theorem~\ref{thm:main-classification} shows
that \(\cE_n(m,\pi)\) is an exceptional set.  Its forward direction
recovers a unique pair \((m,\pi)\) from every exceptional primitive set,
and hence a unique accepted word.
\end{proof}

\begin{remark}
Although \(n\) changes the absolute positions of the final elements in
\(\cE_n(m,\pi)\), it is not part of the acceptance state.
For fixed \(C\), both the state set and the alphabet are finite.
\end{remark}

\section*{Declarations}

\noindent\textbf{Funding.}
This research received no specific grant.

\medskip
\noindent\textbf{Competing interests.}
The author has no competing interests to declare.

\medskip
\noindent\textbf{Use of artificial intelligence.}
OpenAI Codex assisted with literature searches, language polishing, and manuscript preparation. The author takes full responsibility for the
contents of the article.

\bibliographystyle{amsplain}
\bibliography{references}

@misc{CarpenterDefantKravitz2026,
  author        = {Ruben Carpenter and Colin Defant and Noah Kravitz},
  title         = {Sets with Few Subset Sums},
  year          = {2026},
  eprint        = {2605.05498},
  archivePrefix = {arXiv},
  primaryClass  = {math.CO},
  howpublished  = {arXiv:2605.05498v2},
  note          = {Version of 24 August 2026},
  doi           = {10.48550/arXiv.2605.05498}
}

@misc{Chen2026Boundaries,
  author        = {Lizhong Chen},
  title         = {The First Two Exceptional Boundaries for Sets with Few
                   Subset Sums},
  year          = {2026},
  eprint        = {2609.00044},
  archivePrefix = {arXiv},
  primaryClass  = {math.CO},
  howpublished  = {arXiv:2609.00044v2},
  note          = {Version of 4 September 2026},
  doi           = {10.48550/arXiv.2609.00044}
}

@article{Nathanson1995,
  author  = {Melvyn B. Nathanson},
  title   = {Inverse Theorems for Subset Sums},
  journal = {Trans. Amer. Math. Soc.},
  volume  = {347},
  number  = {4},
  year    = {1995},
  pages   = {1409--1418},
  doi     = {10.1090/S0002-9947-1995-1273512-1}
}

@article{DeVosEtAl2007,
  author  = {Matt DeVos and Luis Goddyn and Bojan Mohar and Robert
             {\v S}{\'a}mal},
  title   = {A Quadratic Lower Bound for Subset Sums},
  journal = {Acta Arith.},
  volume  = {129},
  number  = {2},
  year    = {2007},
  pages   = {187--195},
  doi     = {10.4064/aa129-2-4}
}

@article{Lev1998,
  author  = {Vsevolod F. Lev},
  title   = {On Consecutive Subset Sums},
  journal = {Discrete Math.},
  volume  = {187},
  number  = {1--3},
  year    = {1998},
  pages   = {151--160},
  doi     = {10.1016/S0012-365X(98)80006-X}
}

@article{BrickellSaks1993,
  author  = {Ernest Brickell and Michael Saks},
  title   = {The Number of Distinct Subset Sums of a Finite Set of Vectors},
  journal = {J. Combin. Theory Ser. A},
  volume  = {63},
  number  = {2},
  year    = {1993},
  pages   = {234--256},
  doi     = {10.1016/0097-3165(93)90059-H}
}

@article{BalandraudEtAl2013,
  author  = {{\'E}ric Balandraud and Benjamin Girard and Simon Griffiths and
             Yahya Ould Hamidoune},
  title   = {Subset Sums in Abelian Groups},
  journal = {European J. Combin.},
  volume  = {34},
  number  = {8},
  year    = {2013},
  pages   = {1269--1286},
  doi     = {10.1016/j.ejc.2013.05.009}
}

\end{document}